\def\eqref#1{equation~\ref{#1}}
\def\1{\bm{1}}
\DeclareMathAlphabet{\mathsfit}{\encodingdefault}{\sfdefault}{m}{sl}
\SetMathAlphabet{\mathsfit}{bold}{\encodingdefault}{\sfdefault}{bx}{n}
\newtheorem{thm}{Theorem}[section]
\newtheorem{lemma}{Lemma}[section]
\newtheorem{cor}{Corollary}[section]
\newtheorem{assum}{Assumption}[section]
\let\classAND\AND
\let\AND\relax
\let\AND\classAND
\title{Improved Convergence Rates of Muon Optimizer \\for Nonconvex Optimization}
\author{\name Shuntaro Nagashima \email opti.shun@gmail.com \\
      \addr Meiji University\\
      \AND
      \name Hideaki Iiduka \email iiduka@cs.meiji.ac.jp \\
      \addr Meiji University\\
      }
\begin{document}

\maketitle

\begin{abstract}%
The Muon optimizer has recently attracted attention due to its orthogonalized first-order updates, and a deeper theoretical understanding of its convergence behavior is essential for guiding practical applications; however, existing convergence guarantees are either coarse or obtained under restrictive analytical settings. In this work, we establish sharper convergence guarantees for the Muon optimizer through a direct and simplified analysis that does not rely on restrictive assumptions on the update rule. Our results improve upon existing bounds by achieving faster convergence rates while covering a broader class of problem settings. These findings provide a more accurate theoretical characterization of Muon and offer insights applicable to a broader class of orthogonalized first-order methods.
\end{abstract}

\section{Introduction}
\subsection{Background}
In recent years, optimization algorithms have become a crucial factor influencing the stability of training and the final performance of large-scale deep neural networks (DNNs). In particular, when dealing with deep models and high-dimensional parameter spaces, issues such as sensitivity to learning rate selection and numerical instability tend to become pronounced, motivating the development of practical optimization methods that can effectively mitigate these challenges. Against this backdrop, the Muon (Momentum orthogonalized by Newton-Schulz) optimizer has emerged as a new optimization method that is increasingly being used for large-scale model training.

Muon \citep{jordan2024muon} performs updates based on first-order information, similarly to stochastic gradient descent (SGD) \citep{RobbinsMonro1951} and its momentum variants \citep{POLYAK19641,1570572699326076416,pmlr-v28-sutskever13}, while distinguishing itself by incorporating an orthogonalization operation into the update direction (see \citep{si2025adamuonadaptivemuonoptimizer, zhang2025adagradmeetsmuonadaptive, huang2025limuonlightfastmuon, liu2025fedmuonacceleratingfederatedlearning, gruntkowska2025dropmuonupdatelessconverge, ahn2025dion2simplemethodshrink} for the variants of Muon). Whereas Adam \citep{kingma2015adam} stabilizes training through coordinate-wise adaptive scaling, Muon projects the update matrix onto its orthogonal factor, thereby explicitly controlling the correlation structure of the update direction. This property enables Muon to preserve directional information even in high-dimensional spaces, achieving both numerical stability and efficient optimization. Empirical studies have further reported that Muon has advantages over Adam-type optimizers \citep{kingma2015adam,loshchilov2018decoupled} in terms of both computational efficiency and scalability \citep{liu2025muonscalablellmtraining, wang2025muonoutperformsadamtailend, ai2025practicalefficiencymuonpretraining, mehta2025muontrainingtradeoffslatent}, highlighting its potential as a new optimizer capable of replacing conventional methods.

\subsection{Motivation}
Several attempts \citep{tang2025convergenceanalysisadaptiveoptimizers,chang2025convergencemuon,sato2025convergenceboundcriticalbatch,shen2025convergenceanalysismuon,li2025noteconvergencemuon} have been made to theoretically analyze the convergence properties of the Muon optimizer; the existing results are summarized in Table \ref{table:1}. These analyses aim to clarify the theoretical characteristics of Muon by establishing convergence guarantees or upper bounds under specific assumptions. However, each of these analyses is subject to limitations stemming from the strength of the imposed assumptions or the choice of evaluation criteria, and thus it does not necessarily fully capture the actual update structure of Muon.

\begin{table}[H]
\centering
\caption{Comparison of the existing results (1)--(7) with our results \textbf{(R1)}--\textbf{(R5)} for Muon. The existing results are (1) \citep[Theorem 4.6]{tang2025convergenceanalysisadaptiveoptimizers}, (2) \citep[Theorem 3.4]{chang2025convergencemuon}, (3) \citep[Theorem 3.1]{sato2025convergenceboundcriticalbatch}, (4) \citep[Theorem 4.1]{shen2025convergenceanalysismuon}, (5) \citep[Theorem 2.1]{li2025noteconvergencemuon}, (6) \citep[Theorem 5.5]{pethick2025training}, and (7) \citep[Theorem 4.9]{pethick2025generalized}, 
where Result (2) assumes the PL condition and Result (7) assumes $(L_0, L_1)$--smoothness condition. $(\bm{W}_t)$ is the sequence generated by Muon and $\overline{\bm{W}}_T$ is the mean of $(\bm{W}_t)_{t=0}^{T-1}$. $\|\cdot \|_\mathrm{F}$ is for the Frobenius norm and $\|\cdot\|_*$ is the dual norm of $\|\cdot \|_2$. $\mathbb{E}$ is the total expectation. Let $\eta > 0$, $b > 0$, $\delta > 1$, and $T$ be the number of steps. $O$ stands for Landau's symbol, i.e., $y_T = O(x_T)$ implies that, there exist $C > 0$ and $t_0 \in \mathbb{N}$ such that, for all $T \geq t_0$, $y_T \leq C x_T$, while $y_T = \mathcal{O}(x_T)$ implies that, for a fixed $T$, $y_T \leq C x_T$. The notation $O(x (T, \eta, b)) \hookrightarrow \mathcal{O}(x (T) )$ implies that, when $\eta$ and $b$ are set by using the fixed $T$, the convergence rate changes from $O(x (T, \eta, b))$ to $\mathcal{O}(x (T) )$.}
\label{table:1}
\footnotesize
\renewcommand{\arraystretch}{1.2}
\begin{tabular}{llcccc}
\toprule

Indicator & Learning Rate & Batch Size & Momentum Parameter & Convergence Rate\\
\midrule
(1) $\frac{1}{T}\sum_{t=0}^{T-1}\mathbb{E}[\|\nabla f(\bm{W}_t)\|_{\mathrm{F}}]$
& $\eta_t = \eta$
& $b_t = b = 1$
& $1-\beta = \mathcal{O}(\frac{1}{\sqrt{T}})$
& $\mathcal{O}(\frac{1}{T^{1/4}})$ \\
\midrule
(2) $\mathbb{E}[f(\bm{W}_{T+1})]-f^{\star}$
& $\eta_t = t^{-2/3}$
& $b_t = b$
& $\beta_t = 1-t^{-2/3}$
& $O (\frac{(\log{T})^2}{T^{2/3}})$ \\
\midrule
(3) $\frac{1}{T}\sum_{t=0}^{T-1}\mathbb{E}[\|\nabla f(\bm{W}_t)\|_{\mathrm{F}}]$
& $\eta_t = \eta$ 
& $b_t = b$
& $\beta \in [0,1]$
& $\mathcal{O}(\frac{1}{T}+\sqrt{\frac{1-\beta}{b}}+n)$ \\
\midrule
(4) $\frac{1}{T}\sum_{t=0}^{T-1}\mathbb{E}[\|\nabla f(\bm{W}_t)\|_*]$
& $\eta = \mathcal{O}(\frac{1}{\sqrt{T}})$
& $b_t = b = 1$
& $1 - \beta = \min \{\mathcal{O}(\frac{1}{\sqrt{T}}),1\}$
& $\mathcal{O} (\frac{1}{T^{1/4}})$ \\
\midrule
(5) $\frac{1}{T}\sum_{t=0}^{T-1}\mathbb{E}[\|\nabla f(\bm{W}_t)\|_{\mathrm{F}}]$
& $\eta = \mathcal{O}(\frac{1}{\sqrt{T}})$ 
& $b_t = b = T$
& $\beta \in [0,1]$
& $\mathcal{O} (\frac{1}{\sqrt{T}} )$ \\
\midrule
(6) $\mathbb{E}[\|\nabla f(\overline{\bm{W}}_T)\|_*]$
& $\frac{1}{2T^{3/4}}<\eta<\frac{1}{T^{3/4}}$
& $b_t=b$
& $\beta_t =\frac{1}{\sqrt{t}}$
& $\mathcal{O}(\frac{1}{T^{1/4}})$ \\
\midrule
(7) $\mathbb{E}[\|\nabla f(\overline{\bm{W}}_T)\|_*]$
& $\eta = \mathcal{O}(\frac{1}{T^{3/4}})$
& $b_t=b$
& $\beta =\frac{1}{\sqrt{T}}$
& $\mathcal{O}(\frac{1}{T^{1/4}})$ \\
\midrule\midrule
\textbf{(R1)}
$\min_{t} \mathbb{E}[ \| \nabla f (\bm{W}_t) \|_{\mathrm{F}}]$
& $\eta_t$ ($\leq \eta$) : Constant;
& $b_t = b$
& $\beta \in [0,1]$
& $O(\frac{1}{T} + \eta + \frac{1}{\sqrt{b}})$    \\
& Cosine; Polynomial
& $\hookrightarrow b = \mathcal{O}(T)$
&
& $\hookrightarrow \mathcal{O} (\frac{1}{\sqrt{T}})$ \\
& $\hookrightarrow \eta = \mathcal{O}(\frac{1}{T})$
&
&
&  \\
\midrule
\textbf{(R2)}
$\min_{t} \mathbb{E}[ \| \nabla f (\bm{W}_t) \|_{\mathrm{F}}]$
& $\eta_t$ ($\leq \eta$) : Constant;
& $b_t = b$
& $\beta \in [0,1]$
& $O(\frac{1}{T} + \eta + \frac{1}{\sqrt{b}})$    \\
& Cosine; Polynomial
& $\hookrightarrow b = \mathcal{O}(T^2)$
&
& $\hookrightarrow \mathcal{O}(\frac{1}{T})$ \\
& $\hookrightarrow \eta = \mathcal{O}(\frac{1}{T})$
&
&
&  \\
\midrule
\textbf{(R3)}
$\min_{t} \mathbb{E}[ \| \nabla f (\bm{W}_t) \|_{\mathrm{F}}]$
& $\eta_t$ ($\leq \eta$) : Constant;
& $b_t = b \delta^t$
& $\beta \in [0,1]$
& $O(\frac{1}{T} + \eta)$    \\
& Cosine; Polynomial
& : Exponential
&
& $\hookrightarrow \mathcal{O}(\frac{1}{T})$ \\
& $\hookrightarrow \eta = \mathcal{O}(\frac{1}{T})$
& ($\delta > 1$)
&
&  \\
\midrule
\textbf{(R4)}
$\min_{t} \mathbb{E}[ \| \nabla f (\bm{W}_t) \|_{\mathrm{F}}]$
& $\eta_t = \frac{\eta}{\sqrt{t+1}}$
& $b_t = b$ 
& $\beta \in [0,1]$
& $O(\frac{\log T}{\sqrt{T}} + \frac{1}{\sqrt{b}})$    \\
& 
& $\hookrightarrow b = \mathcal{O}(T)$
&
& $\hookrightarrow \mathcal{O}(\frac{\log T}{\sqrt{T}})$ \\
\midrule
\textbf{(R5)}
$\min_{t} \mathbb{E}[ \| \nabla f (\bm{W}_t) \|_{\mathrm{F}}]$
& $\eta_t = \frac{\eta}{\sqrt{t+1}}$
& $b_t = b \delta^t$
& $\beta \in [0,1]$
& $O(\frac{\log T}{\sqrt{T}} )$    \\
\bottomrule
\end{tabular}
\end{table}

Upon examining the results summarized in Table \ref{table:1}, we observe that Result (1) \citep[Theorem 4.6]{tang2025convergenceanalysisadaptiveoptimizers} provides the convergence guarantee of Muon; however, its convergence rate would be inferior to that obtained in the present work. Result (2) \citep[Theorem 3.4]{chang2025convergencemuon} can be interpreted as achieving a good convergence rate $O (\frac{(\log{T})^2}{T^{2/3}})$; however, it relies on the strong Polyak--\L ojasiewicz (PL) condition, which significantly restricts its range of applicability. In the case of (3) \citep[Theorem 3.1]{sato2025convergenceboundcriticalbatch}, a non-negligible variable $n$, where a parameter of a DNN is in $\mathbb{R}^{m \times n}$, remain in the analysis, and therefore a complete convergence result in a strict sense is not established. Result (4) \citep[Theorem 4.1]{shen2025convergenceanalysismuon}, similar to (1), provides a certain convergence guarantee, but its convergence rate would be again inferior to that achieved in this work. Result (5) \citep[Theorem 2.1]{li2025noteconvergencemuon} indicates that setting the number of steps $T$ as the batch size $b$ ensures a better $\mathcal {O}(\frac{1}{\sqrt{T}})$ convergence rate of Muon than an $\mathcal {O}(\frac{1}{T^{1/4}})$ convergence rate in Results (1) and (4). 
Result (6) \citep[Theorem 5.5]{pethick2025training} and
Result (7) \citep[Theorem 4.9]{pethick2025generalized} considered algorithms similar to Muon and showed that the algorithms have an $\mathcal {O}(\frac{1}{T^{1/4}})$ convergence rate.

Taken together, these observations indicate that the existing convergence analyses of the Muon optimizer remain insufficient in light of its demonstrated practical effectiveness, and that a more refined theoretical understanding is required. In particular, establishing clear convergence guarantees under more general assumptions constitute important challenges for the theoretical characterization of Muon. To address the issue, this work revisits the convergence behavior of the Muon optimizer and aims to establish more accurate and improved convergence guarantees than those provided by existing analyses.

\subsection{Contribution}
The main contributions of this paper are summarized as follows.

\textbf{Upper bound of the total expectation of the full gradient generated by the Muon optimizer:} We first present a convergence analysis of Muon (Theorem \ref{thm:1}) under standard conditions (Assumption \ref{assum:1}). The analysis indicates that an upper bound of $\mathbb{E}[\|\nabla f (\bm{W}_t)\|_{\mathrm{F}}]$ generated by Muon without Nesterov (resp. with Nesterov; see Algorithm \ref{muon}) consists of five (resp. six) quantities depending on learning rate $\eta_t$, batch size $b_t$, and momentum parameter $\beta$. The key ideas of the proof of Theorem \ref{thm:1} are based on simplified techniques, such as the descent lemma (\eqref{descent_lemma} and \eqref{ineq:1} in Section \ref{proof_Theorem_1}) that is satisfied for a smooth function (Assumption \ref{assum:1}(i)) and the orthogonalization structure (Step 9 in Algorithm \ref{muon} and \eqref{ineq:2} and \eqref{ineq:3} in Section \ref{proof_Theorem_1}). 

\textbf{Improved convergence rate for the Muon optimizer in the sense of the notation $\mathcal{O}$:} We evaluate the upper bound in the sense of the notation $\mathcal{O}$, i.e., the case where the number of steps $T$ needed to train the DNN is fixed, by using practical learning rates and batch sizes. For example, when using a learning rate $\eta_t$ $(\leq \eta)$ defined by a constant, cosine-annealing, or polynomial decay learning rate and a constant batch size $b_t = b$, Muon has the upper bound $O(\frac{1}{T} + \eta + \frac{1}{\sqrt{b}})$, as shown in (\textbf{R1}) and (\textbf{R2}) in Table \ref{table:1} (Corollary \ref{cor:1}(i), (iii), and (v)). Hence, using $\eta = \mathcal{O}(\frac{1}{T})$ and $b = \mathcal{O}(T)$ leads to an $\mathcal{O}(\frac{1}{\sqrt{T}})$ convergence rate, while using $\eta = \mathcal{O}(\frac{1}{T})$ and $b = \mathcal{O}(T^2)$ leads to an $\mathcal{O}(\frac{1}{T})$ convergence rate. This implies that the larger $b$ is, the faster the convergence becomes. Such a trend can be seen in SGD and its variants \citep{l.2018dont,umeda2025increasing,oowada2025faster}. Motivated by this trend, we also consider using an increasing batch size, such as an exponentially growing batch size $b_t = b \delta^t$ (e.g., $\delta = 2$ where the batch size is doubled every step (or epoch)). As a result, Muon has the upper bound $O(\frac{1}{T} + \eta)$, as seen in (\textbf{R3}) in Table \ref{table:1} (Corollary \ref{cor:1}(ii), (iv), and (vi)). Hence, using $\eta = \mathcal{O}(\frac{1}{T})$ leads to an $\mathcal{O}(\frac{1}{T})$ convergence rate. The convergence rate $\mathcal{O}(\frac{1}{T})$ of Muon shown in (\textbf{R2}) and (\textbf{R3}) is an improvement on the existing $\mathcal{O}(\frac{1}{\sqrt{T}})$ rate in Result (5) \citep[Theorem 2.1]{li2025noteconvergencemuon} , Result (6) \citep[Theorem 5.5]{pethick2025training}, and Result (7) \citep[Theorem 4.9]{pethick2025generalized}. 

\textbf{Convergence guarantee of the Muon optimizer in the sense of the notation $O$:} Result (2) \citep[Theorem 3.4]{chang2025convergencemuon} showed that, under the PL condition, Muon with a diminishing learning rate has an $O(\frac{(\log T)^2}{T^{2/3}})$ rate of convergence. Hence, we would like to verify whether Muon with a diminishing learning rate $\eta_t$ converges without assuming the PL condition. Using a diminishing learning rate $\eta_t = \frac{\eta}{\sqrt{t+1}}$ and a constant batch size $b_t = b$ ensures that the upper bound is $O(\frac{\log T}{\sqrt{T}} + \frac{1}{\sqrt{b}})$, as shown in (\textbf{R4}) in Table \ref{table:1} (Corollary \ref{cor:1}(vii)). This result, together with using an increasing batch size (see (\textbf{R3})), leads to the finding that using an exponentially growing batch size achieves an $O(\frac{\log T}{\sqrt{T}})$ convergence rate, as shown in (\textbf{R5}) in Table \ref{table:1} (Corollary \ref{cor:1}(viii)). 

\section{Preliminaries}
Here, we describe the notation and state some definitions. Let $\mathbb{N}$ be the set of natural numbers. Let $[n] \coloneqq \{1,2,\cdots,n\}$ and $[0:n] \coloneqq \{0,1,\cdots,n\}$ for $n\in \mathbb{N}$. Let $\mathbb{R}^{m \times n}$ be the set of $m \times n$ matrices with inner product $\bm{W}_1 \bullet \bm{W}_2$ ($\bm{W}_1, \bm{W}_2 \in \mathbb{R}^{m \times n}$) and the Frobenius norm $\|\bm{W}\|_{\mathrm{F}} \coloneqq \sqrt{\bm{W} \bullet \bm{W}}$. The dual norm of the spectral norm is denoted by $\|\cdot\|_*$. $\mathbb{E}_\xi [\cdot]$ and $\mathbb{V}_\xi [\cdot]$ denote the expectation and variance with respect to a random variable $\xi$, respectively. $\mathbb{E}[\cdot]$ denotes the total expectation. 

\subsection{Nonconvex optimization problem and assumptions}
Let $S \coloneqq \{(\bm{x}_1, \bm{y}_1), \cdots, (\bm{x}_i, \bm{y}_i), \cdots, (\bm{x}_N, \bm{y}_N) \}$ be the training set, where data point $\bm{x}_i$ is associated with label $\bm{y}_i$, and let $\bm{W} \in \mathbb{R}^{m \times n}$ be a parameter of a DNN. Let $f_i \colon \mathbb{R}^{m \times n} \to \mathbb{R}$ be the loss function corresponding to the $i$-th labeled training data $(\bm{x}_i, \bm{y}_i)$. Then, we would like to minimize the empirical risk defined for all $\bm{W} \in \mathbb{R}^{m \times n}$ by $f (\bm{W}) \coloneqq \frac{1}{N} \sum_{i=1}^N f_i (\bm{W})$. Since $f_i$ is nonconvex, the problem of minimizing the above $f$ is a nonconvex optimization problem. We assume that $f_i$ $(i\in [n])$ is differentiable. This paper thus considers finding a stationary point of $f$ defined by $\bm{W}^\star$ with $\nabla f (\bm{W}^\star) = \bm{O}$.

We make the following standard assumptions to use an optimizer to minimize $f$ (see, e.g., \citep[Section 2.1]{umeda2025increasing} for justification of Assumption \ref{assum:1}). 

\begin{assum}\label{assum:1}
\item[{\em (i)}] {\em [Smoothness of loss function]} $f_i \colon \mathbb{R}^{m \times n} \to \mathbb{R}$ $(i\in [N])$ is $L_i$-smooth, i.e., for all $\bm{W}_1, \bm{W}_2 \in \mathbb{R}^{m \times n}$, $\| \nabla f_i (\bm{W}_1) - \nabla f_i (\bm{W}_2) \|_{\mathrm{F}} \leq L_i \| \bm{W}_1 - \bm{W}_2 \|_{\mathrm{F}}$. $f_i (\bm{W})$. $f_i^\star \coloneqq \inf_{\bm{W} \in \mathbb{R}^{m \times n}} f_i (\bm{W})$ $(i\in [N])$ is finite. 
\item[{\em (ii)}] Let $\xi$ be a random variable that is independent of $\bm{W} \in \mathbb{R}^{m \times n}$. $\nabla f_\xi \colon \mathbb{R}^{m \times n} \to \mathbb{R}^{m \times n}$ is the stochastic gradient of $f$ such that 
   \begin{enumerate}
 \item[{\em (ii-1)}] {\em [Unbiasedness of stochastic gradient]} for all $\bm{W} \in \mathbb{R}^{m \times n}$, $\mathbb{E}_{\xi} [\nabla f_\xi (\bm{W}) ] = \nabla f(\bm{W})$;
 \item[{\em (ii-2)}] {\em [Boundedness of variance of stochastic gradient]} There exists $\sigma \geq 0$ such that, for all $\bm{W} \in \mathbb{R}^{m \times n}$, $\mathbb{V}_\xi [\nabla f_\xi (\bm{W}) ] \leq \sigma^2$.
   \end{enumerate}
\end{assum}

Assumption \ref{assum:1}(i) implies that the following descent lemma \citep[Lemma 5.7]{doi:10.1137/1.9781611974997} holds for $f$: for all $\bm{W}_1, \bm{W}_2 \in \mathbb{R}^{m \times n}$,
\begin{align}\label{descent_lemma}
    f(\bm{W}_1) \leq f(\bm{W}_2) + \nabla f(\bm{W}_2) \bullet (\bm{W}_1 - \bm{W}_2) + \frac{L}{2} \|\bm{W}_1 - \bm{W}_2\|_{\mathrm{F}}^2,
\end{align}
where $L \coloneqq \frac{1}{N} \sum_{i=1}^N L_i$. Moreover, Assumption \ref{assum:1}(i) guarantees that $f^\star \coloneqq \inf_{\bm{W} \in \mathbb{R}^{m \times n}} f(\bm{W}) < + \infty$ holds.

\subsection{Muon optimizer}
\begin{algorithm}[H]
 \caption{Muon}\label{muon}
    \begin{algorithmic}[1]
    \REQUIRE 
    $\eta_t > 0$,
    $b_t > 0$, 
    $\beta \in [0, 1)$, 
    $\bm{M}_{-1} \coloneqq \bm{O}$, $\bm{W}_0 \in \mathbb{R}^{m \times n}$,
    $T \in \mathbb{N}$.
    \FOR{$t = 0$ \TO $T - 1$}
        \STATE $\nabla f_{B_t} (\bm{W}_t) \coloneqq \frac{1}{b_t} \sum_{i=1}^{b_t} \nabla f_{\xi_{t,i}} (\bm{W}_t)$
        \STATE $\bm{M}_t \coloneqq \beta \bm{M}_{t-1} + (1 - \beta) \nabla f_{B_t} (\bm{W}_t)$
        \IF{(Nesterov = True)}
            \STATE $\bm{C}_t \coloneqq \beta \bm{M}_t + (1 - \beta) \nabla f_{B_t}(\bm{W}_t)$
        \ELSE
            \STATE $\bm{C}_t \coloneqq \bm{M}_t$
        \ENDIF
        \STATE $\bm{O}_t \coloneqq \underset{\bm{O}^\top \bm{O} = \bm{I}_n} {\operatorname{argmin}} \| \bm{O} - \bm{C}_t \|_{\mathrm{F}}$
            \STATE $\bm{W}_{t+1} \coloneqq \bm{W}_t - \eta_t \bm{O}_t$
    \ENDFOR
    \RETURN $\bm{W}_T$
    \end{algorithmic}
\end{algorithm}

Algorithm \ref{muon} \citep{jordan2024muon} presents the most common variant of Muon, which incorporates Nesterov momentum (Step 5 in Algorithm \ref{muon}). The Muon optimizer with Nesterov momentum is often used in practice (e.g., see \citep{jordan2024muon,ai2025practicalefficiencymuonpretraining}). 

Let $b_t$ be the batch size at $t \in \mathbb{N}$ and let $\bm{\xi}_t \coloneqq (\xi_{t,1}, \cdots, \xi_{t,i}, \cdots, \xi_{t, b_t})^\top$ comprise $b_t$ independent and identically distributed variables and be independent of $\bm{W} \in \mathbb{R}^{m \times n}$. Assumption \ref{assum:1}(ii) implies that the mini-batch stochastic gradient $\nabla f_{B_t}$ defined as in Step 2 of Algorithm \ref{muon} has the following properties for all $\bm{W} \in \mathbb{R}^{m \times n}$ and all $t \in \mathbb{N}$ (see, e.g., \citep[Proposition A.1]{umeda2025increasing}): 
\begin{align}\label{assum_ii_2}
    \mathbb{E}_{\bm{\xi}_t} \left[ \nabla f_{B_t} (\bm{W})\right] = \nabla f (\bm{W}) \text{ and }
    \mathbb{V}_{\bm{\xi}_t} \left[ \nabla f_{B_t} (\bm{W})\right] \leq \frac{\sigma^2}{b_t}.
\end{align}

\section{Convergence Analysis of Muon optimizer}
\subsection{Main result}
We present the following convergence analysis of Algorithm \ref{muon}. The proof of Theorem \ref{thm:1} is given in Section \ref{proof_Theorem_1}.

\begin{thm}\label{thm:1}
Suppose that Assumption \ref{assum:1} holds. Then, the following hold.
    \begin{enumerate}
\item[{\em (i)}] Muon without Nesterov ($\bm{C}_t = \bm{M}_t$) satisfies that, for all $T \in \mathbb{N}$, 
        \begin{align*}
        \min_{t \in [0:T-1]} 
        \mathbb{E}[\|\nabla f(\bm{W}_t)\|_{\mathrm{F}}]
        &\le
        \frac{C_1}{\sum_{t=0}^{T-1} \eta_t} 
        + C_2 \frac{\sum_{t=0}^{T-1} \eta_t^2}{\sum_{t=0}^{T-1} \eta_t}
        + C_3 \frac{\sum_{t=0}^{T-1} \eta_t \beta^t}{\sum_{t=0}^{T-1} \eta_t}\\ 
        &\quad + C_4  
        \frac{\sum_{t=0}^{T-1} \eta_t \sum_{i=1}^{t}\beta^i \eta_{t-i}}{\sum_{t=0}^{T-1} \eta_t}
        + C_5 
        \frac{\sum_{t=0}^{T-1} \eta_t \sum_{i=0}^{t} \frac{\beta^i}{\sqrt{b_{t-i}}}}{\sum_{t=0}^{T-1} \eta_t},
        \end{align*}
 where 
        \begin{align*}
            &C_1 \coloneqq f(\bm{W}_0) - f^\star, \text{ } 
            C_2 \coloneqq \frac{nL}{2}, 
            \text{ }
            C_3 \coloneqq 2 \|\bm{M}_0 - \nabla f(\bm{W}_0)\|_{\mathrm{F}} \sqrt{n},\\
            &C_4 \coloneqq 2 n L, \text{ and }
            C_5 \coloneqq 2 (1-\beta)\sigma \sqrt{n}.
        \end{align*}
 \item[{\em (ii)}] Muon with Nesterov ($\bm{C}_t = \beta \bm{M}_t + (1 - \beta) \nabla f_{B_t} (\bm{W}_t)$) satisfies that, for all $T \in \mathbb{N}$,
        \begin{align*}
        \min_{t \in [0:T-1]} \mathbb{E}[\|\nabla f(\bm{W}_t)\|]
        &\le 
        \frac{C_1}{\sum_{t=0}^{T-1} \eta_t} 
        + C_2 \frac{\sum_{t=0}^{T-1} \eta_t^2}{\sum_{t=0}^{T-1} \eta_t}
        + C_3 \frac{\sum_{t=0}^{T-1} \eta_t \beta^{t+1}}{\sum_{t=0}^{T-1} \eta_t}\\
        &\quad 
        + C_4 \frac{\sum_{t=0}^{T-1} \eta_t \sum_{i=1}^{t} \beta^{i+1} \eta_{t-i}}{\sum_{t=0}^{T-1} \eta_t}
        + C_5 \frac{\sum_{t=0}^{T-1} \eta_t \sum_{i=0}^{t} \frac{\beta^{i+1}}{\sqrt{b_{t-i}}}}{\sum_{t=0}^{T-1} \eta_t}\\
        &\quad 
        + C_6 \frac{\sum_{t=0}^{T-1} \frac{\eta_t}{\sqrt{b_t}}}{\sum_{t=0}^{T-1} \eta_t},
        \end{align*}
 where $C_i$ ($i\in [5]$) is defined as in Theorem \ref{thm:1}(i) and $C_6 \coloneqq 2 (1-\beta)\sigma \sqrt{n}$.
    \end{enumerate}
\end{thm}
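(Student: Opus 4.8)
The plan is to combine the descent lemma \eqref{descent_lemma} with the defining property of the orthogonalization step, and then to control the gap between the orthogonalized update direction and the true gradient by unrolling the momentum recursion. First I would apply the descent lemma to the update $\bm{W}_{t+1} = \bm{W}_t - \eta_t \bm{O}_t$ of Step 10 in Algorithm \ref{muon}. Because the constraint $\bm{O}_t^\top \bm{O}_t = \bm{I}_n$ forces $\|\bm{O}_t\|_{\mathrm{F}}^2 = \mathrm{Tr}(\bm{O}_t^\top\bm{O}_t) = \mathrm{Tr}(\bm{I}_n) = n$, this gives the per-step inequality
\begin{align*}
f(\bm{W}_{t+1}) \le f(\bm{W}_t) - \eta_t\, \nabla f(\bm{W}_t)\bullet \bm{O}_t + \frac{nL}{2}\,\eta_t^2,
\end{align*}
which already produces the $C_2$ term.

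The crux is to lower bound $\nabla f(\bm{W}_t)\bullet \bm{O}_t$ by a multiple of $\|\nabla f(\bm{W}_t)\|_{\mathrm{F}}$. Writing the singular value decomposition $\bm{C}_t = \bm{U}\bm{\Sigma}\bm{V}^\top$, the minimizer in Step 9 is the orthogonal polar factor $\bm{O}_t = \bm{U}\bm{V}^\top$, so $\bm{C}_t \bullet \bm{O}_t$ equals the sum of the singular values of $\bm{C}_t$ and is therefore at least $\|\bm{C}_t\|_{\mathrm{F}}$. Decomposing $\nabla f(\bm{W}_t)\bullet \bm{O}_t = \bm{C}_t\bullet \bm{O}_t + (\nabla f(\bm{W}_t)-\bm{C}_t)\bullet \bm{O}_t$, bounding the second summand by Cauchy--Schwarz with $\|\bm{O}_t\|_{\mathrm{F}} = \sqrt{n}$, and invoking the triangle inequality $\|\bm{C}_t\|_{\mathrm{F}} \ge \|\nabla f(\bm{W}_t)\|_{\mathrm{F}} - \|\bm{C}_t - \nabla f(\bm{W}_t)\|_{\mathrm{F}}$ together with $1+\sqrt{n}\le 2\sqrt{n}$, I obtain
\begin{align*}
\nabla f(\bm{W}_t)\bullet \bm{O}_t \ge \|\nabla f(\bm{W}_t)\|_{\mathrm{F}} - 2\sqrt{n}\,\|\bm{C}_t - \nabla f(\bm{W}_t)\|_{\mathrm{F}}.
\end{align*}
This is where the factor $2\sqrt{n}$ feeding $C_3,C_4,C_5$ originates.

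Next I would bound $\mathbb{E}[\|\bm{C}_t - \nabla f(\bm{W}_t)\|_{\mathrm{F}}]$. In the no-Nesterov case $\bm{C}_t = \bm{M}_t$, so I set $\bm{e}_t \coloneqq \bm{M}_t - \nabla f(\bm{W}_t)$ and use the recursion $\bm{e}_t = \beta \bm{e}_{t-1} + \beta(\nabla f(\bm{W}_{t-1})-\nabla f(\bm{W}_t)) + (1-\beta)(\nabla f_{B_t}(\bm{W}_t)-\nabla f(\bm{W}_t))$. Unrolling and taking norms and total expectations splits the bound into three separately controllable sources: the initial term $\beta^t\,\mathbb{E}[\|\bm{M}_0 - \nabla f(\bm{W}_0)\|_{\mathrm{F}}]$ (giving $C_3$); the drift terms, bounded by $L$-smoothness and $\|\bm{W}_{k-1}-\bm{W}_k\|_{\mathrm{F}} = \eta_{k-1}\sqrt{n}$, which collapse to $L\sqrt{n}\sum_{i=1}^t \beta^i \eta_{t-i}$ (so that the outer $2\sqrt{n}$ yields $C_4 = 2nL$); and the stochastic terms, bounded via \eqref{assum_ii_2} and Jensen's inequality by $(1-\beta)\sigma\sum_{i=0}^t \beta^i/\sqrt{b_{t-i}}$ (giving $C_5$), where the noise sum is freely extended to $i=t$ as an upper bound.

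Finally I would substitute the orthogonalization inequality into the per-step descent bound, take total expectations, sum over $t\in[0:T-1]$ so the differences $f(\bm{W}_t)-f(\bm{W}_{t+1})$ telescope to at most $f(\bm{W}_0)-f^\star = C_1$, divide by $\sum_{t=0}^{T-1}\eta_t$, and use $\min_t \mathbb{E}[\|\nabla f(\bm{W}_t)\|_{\mathrm{F}}] \le (\sum_t \eta_t\,\mathbb{E}[\|\nabla f(\bm{W}_t)\|_{\mathrm{F}}])/(\sum_t \eta_t)$; inserting the error bound yields part (i). For part (ii) the only change is that $\bm{C}_t = \beta\bm{M}_t + (1-\beta)\nabla f_{B_t}(\bm{W}_t)$ gives $\bm{C}_t - \nabla f(\bm{W}_t) = \beta\bm{e}_t + (1-\beta)(\nabla f_{B_t}(\bm{W}_t)-\nabla f(\bm{W}_t))$, so the factor $\beta$ on $\bm{e}_t$ shifts every $\beta^i$ to $\beta^{i+1}$ in the $C_3,C_4,C_5$ terms, while the fresh mini-batch noise at step $t$ contributes the extra single term $C_6\,\eta_t/\sqrt{b_t}$. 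The main obstacle is precisely the orthogonalization inequality together with the momentum bookkeeping: establishing the polar-factor/singular-value lower bound correctly and tracking the $\beta$-power indexing through the unrolled recursion (including its one-step shift and the additional noise term under Nesterov) is the delicate part, whereas the telescoping and weighted averaging at the end are routine.
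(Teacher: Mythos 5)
Your proposal is correct and follows essentially the same route as the paper: descent lemma with $\|\bm{O}_t\|_{\mathrm{F}}=\sqrt{n}$, the identity $\bm{C}_t\bullet\bm{O}_t=\|\bm{C}_t\|_*$ for the polar factor, a $2\sqrt{n}$-weighted bound on $\|\bm{C}_t-\nabla f(\bm{W}_t)\|_{\mathrm{F}}$ obtained by unrolling the momentum recursion, and telescoping plus weighted averaging. The only (immaterial) difference is that you pass from the nuclear norm to the Frobenius norm before applying the triangle inequality, giving the coefficient $1+\sqrt{n}\le 2\sqrt{n}$, whereas the paper applies the triangle inequality in the dual norm and then uses $\|\bm{X}\|_*\le\sqrt{\mathrm{rank}(\bm{X})}\,\|\bm{X}\|_{\mathrm{F}}$; both yield the stated constants.
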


\subsection{Concrete convergence rate}
Theorem \ref{thm:1} indicates that, if a learning rate $\eta_t$ and a batch size $b_t$ can be chosen such that, when $T$ is large enough, 
\begin{align}\label{conditions_1}
\begin{split}
&\frac{1}{\sum_{t=0}^{T-1} \eta_t}, \text{ }
\frac{\sum_{t=0}^{T-1} \eta_t^2}{\sum_{t=0}^{T-1} \eta_t}, \text{ }
\frac{\sum_{t=0}^{T-1} \eta_t \beta^{t}}{\sum_{t=0}^{T-1} \eta_t}, 
\frac{\sum_{t=0}^{T-1} \eta_t \sum_{i=1}^{t} \beta^{i} \eta_{t-i}}{\sum_{t=0}^{T-1} \eta_t}, \\
&\frac{\sum_{t=0}^{T-1} \eta_t \sum_{i=0}^{t} \frac{\beta^i}{\sqrt{b_{t-i}}}}{\sum_{t=0}^{T-1} \eta_t}, \text{ and }
\frac{\sum_{t=0}^{T-1} \frac{\eta_t}{\sqrt{b_t}}}{\sum_{t=0}^{T-1} \eta_t} \text{ are small enough,}
\end{split}
\end{align}
then the Muon optimizer can approximate a stationary point of $f$ in the sense of the total expectation of the Frobenius norm.

Let $\eta > 0$, $b > 0$, $p > 0$, $\delta > 1$, and $T \in \mathbb{N}$. We focus on the following four practical learning rates: for all $t \in [T]$,
\begin{align}\label{lr}
\eta_t 
= 
\begin{dcases}
\eta &\text{ [Constant LR]} \\
\frac{\eta}{2} \left(1 + \cos \frac{t \pi}{T} \right) &\text{ [Cosine-annealing LR]}\\
\eta \left( 1 - \frac{t}{T}  \right)^p &\text{ [Polynomial decay LR]}\\
\frac{\eta}{\sqrt{t+1}} &\text{ [Diminishing LR]}.
\end{dcases}
\end{align}
and the following two practical batch sizes: for all $t \in [T]$,
\begin{align}\label{bs}
b_t = 
\begin{dcases}
b  &\text{ [Constant BS]} \\  
b \delta^t  &\text{ [Exponentially growing BS]}.
\end{dcases}
\end{align}
We may modify the cosine-annealing LR in \eqref{lr} updated each step to one (e.g., $\eta_t = \frac{\eta}{2}(1 + \cos \lfloor \frac{t}{K} \rfloor \frac{\pi}{E})$, where $K \coloneqq \lceil \frac{N}{b}\rceil$ and $E$ is the number of epochs) updated each epoch and the exponentially growing BS in \eqref{bs} updated each step to the one updated each epoch. This is because the number of steps per epoch is finite and the upper bound in Theorem \ref{thm:1} is independent of the modification. 

From Theorem \ref{thm:1}, it is sufficient to check whether $\eta_t$ and $b_t$ defined by \eqref{lr} and \eqref{bs} satisfy Condition \eqref{conditions_1}. The following corollary indicates the convergence of the Muon optimizer with $\eta_t$ and $b_t$ satisfying Condition \eqref{conditions_1}. The proof of Corollary \ref{cor:1} is based on elementary operations (The detailed proof is given in Appendix \ref{proof_cor_1}).

\begin{cor}\label{cor:1}
Let $\eta > 0$, $b > 0$, $\beta \in [0,1)$, $p > 0$, and $\delta > 1$. Under the assumptions in Theorem \ref{thm:1}, the Muon optimizer with $\eta_t$ and $b_t$ defined as in \eqref{lr} and \eqref{bs} has the following properties: 
\begin{enumerate}
 \item[{\em (i)}] When using a constant LR $\eta_t = \eta$ and constant BS $b_t = b$, for all $T \in \mathbb{N}$, 
    \begin{align*}
    \min_{t \in [0:T-1]} \mathbb{E}[\|\nabla f(\bm{W}_t)\|]
    \leq 
    \begin{dcases}
    \frac{D_1}{T} + D_2 \eta + \frac{D_3}{\sqrt{b}} 
    = O \left( \frac{1}{T} + \eta + \frac{1}{\sqrt{b}} \right) \text{ (without Nesterov)}, \\
    \frac{D_1}{T} + D_2 \eta + \frac{D_4}{\sqrt{b}}
    = O \left( \frac{1}{T} + \eta + \frac{1}{\sqrt{b}} \right) \text{ (with Nesterov)},
    \end{dcases}
    \end{align*}
where $D_1 \coloneqq \eta^{-1} C_1 + (1 - \beta )^{-1} C_3$, $D_2 \coloneqq C_2 + \frac{C_4}{1 - \beta}$, $D_3 \coloneqq \frac{C_5}{1 - \beta}$, and $D_4 \coloneqq \frac{C_5 + C_6}{1 - \beta}$. In particular, for a given $T$, 
    \begin{align*}
    \min_{t \in [0:T-1]} \mathbb{E}[\|\nabla f(\bm{W}_t)\|]
    =
    \begin{dcases}
      \mathcal{O} \left( \frac{1}{\sqrt{T}} \right) &\text{ } \left(\eta = \mathcal{O} \left(\frac{1}{\sqrt{T}} \right) \land b = \mathcal{O}(T) \right), \\
      \mathcal{O} \left( \frac{1}{\sqrt{T}} \right) &\text{ } \left(\eta = \mathcal{O} \left(\frac{1}{T} \right) \land b = \mathcal{O}(T) \right), \\
      \mathcal{O} \left( \frac{1}{T} \right) &\text{ } \left(\eta = \mathcal{O} \left(\frac{1}{T} \right) \land b = \mathcal{O}(T^2) \right).
    \end{dcases}
    \end{align*}
\item[{\em (ii)}] When using a constant LR $\eta_t = \eta$ and an exponentially growing BS $b_t = b \delta^t$, for all $T \in \mathbb{N}$, 
    \begin{align*}
    \min_{t \in [0:T-1]} \mathbb{E}[\|\nabla f(\bm{W}_t)\|]
    \leq 
    \begin{dcases}
    \frac{D_1}{T} + D_2 \eta + \frac{\sqrt{\delta} D_3}{(\sqrt{\delta} - 1)\sqrt{b} T} 
    = O \left( \frac{1}{T} + \eta \right) \text{ (without Nesterov)}, \\
    \frac{D_1}{T} + D_2 \eta + \frac{\sqrt{\delta} D_4}{(\sqrt{\delta} - 1)\sqrt{b} T}
    = O \left( \frac{1}{T} + \eta \right) \text{ (with Nesterov)} .
    \end{dcases}
    \end{align*}
 In particular, for a given $T$, 
    \begin{align*}
    \min_{t \in [0:T-1]} \mathbb{E}[\|\nabla f(\bm{W}_t)\|]
    =
    \begin{dcases}
      \mathcal{O} \left( \frac{1}{\sqrt{T}} \right) &\text{ } \left(\eta = \mathcal{O} \left(\frac{1}{\sqrt{T}} \right)  \right), \\
      \mathcal{O} \left( \frac{1}{T} \right) &\text{ } \left(\eta = \mathcal{O} \left(\frac{1}{T} \right) \right).
    \end{dcases}
    \end{align*}

 \item[{\em (iii)}] When using a cosine-annealing LR $\eta_t = \frac{\eta}{2}(1 + \cos \frac{t \pi}{T})$ and a constant BS $b_t = b$, for all $T \in \mathbb{N}$,
    \begin{align*}
    \min_{t \in [0:T-1]} \mathbb{E}[\|\nabla f(\bm{W}_t)\|]
    \leq 
    \begin{dcases}
    \frac{D_1'}{T} + D_2' \eta + \frac{2 D_3}{\sqrt{b}} 
    = O \left( \frac{1}{T} + \eta + \frac{1}{\sqrt{b}} \right) \text{ (without Nesterov)}, \\
    \frac{D_1'}{T} + D_2' \eta + \frac{2 D_4}{\sqrt{b}}
    = O \left( \frac{1}{T} + \eta + \frac{1}{\sqrt{b}} \right) \text{ (with Nesterov)},
    \end{dcases}
    \end{align*}
 where $D_1' \coloneqq 2 \eta^{-1} C_1 + \eta C_2 + 2(1 - \beta )^{-1} C_3$ and $D_2' \coloneqq \frac{3 C_2}{4} + \frac{2 C_4}{1-\beta}$. In particular, for a given $T$, 
    \begin{align*}
    \min_{t \in [0:T-1]} \mathbb{E}[\|\nabla f(\bm{W}_t)\|]
    =
    \begin{dcases}
      \mathcal{O} \left( \frac{1}{\sqrt{T}} \right) &\text{ } \left(\eta = \mathcal{O} \left(\frac{1}{\sqrt{T}} \right) \land b = \mathcal{O}(T) \right), \\
      \mathcal{O} \left( \frac{1}{\sqrt{T}} \right) &\text{ } \left(\eta = \mathcal{O} \left(\frac{1}{T} \right) \land b = \mathcal{O}(T) \right), \\
      \mathcal{O} \left( \frac{1}{T} \right) &\text{ } \left(\eta = \mathcal{O} \left(\frac{1}{T} \right) \land b = \mathcal{O}(T^2) \right).
    \end{dcases}
    \end{align*}
 \item[{\em (iv)}] When using a cosine-annealing LR $\eta_t = \frac{\eta}{2}(1 + \cos \frac{t \pi}{T})$ and an exponentially growing BS $b_t = b \delta^t$, for all $T \in \mathbb{N}$,
    \begin{align*}
    \min_{t \in [0:T-1]} \mathbb{E}[\|\nabla f(\bm{W}_t)\|]
    \leq 
    \begin{dcases}
    \frac{D_1'}{T} + D_2' \eta + \frac{2 \sqrt{\delta} D_3}{(\sqrt{\delta} - 1)\sqrt{b} T} 
    = O \left( \frac{1}{T} + \eta \right) \text{ (without Nesterov)}, \\
    \frac{D_1'}{T} + D_2' \eta + \frac{2 \sqrt{\delta} D_4}{(\sqrt{\delta} - 1)\sqrt{b} T}
    = O \left( \frac{1}{T} + \eta \right) \text{ (with Nesterov)} .
    \end{dcases}
    \end{align*}
 In particular, for a given $T$, 
    \begin{align*}
    \min_{t \in [0:T-1]} \mathbb{E}[\|\nabla f(\bm{W}_t)\|]
    =
    \begin{dcases}
      \mathcal{O} \left( \frac{1}{\sqrt{T}} \right) &\text{ } \left(\eta = \mathcal{O} \left(\frac{1}{\sqrt{T}} \right) \right), \\
      \mathcal{O} \left( \frac{1}{T} \right) &\text{ } \left(\eta = \mathcal{O} \left(\frac{1}{T} \right)\right).
    \end{dcases}
    \end{align*}

 \item[{\em (v)}] When using a polynomial decay LR $\eta_t = \eta (1 - \frac{t}{T})^p$ and a constant BS $b_t = b$, for all $T \in \mathbb{N}$,
    \begin{align*}
    \min_{t \in [0:T-1]} \mathbb{E}[\|\nabla f(\bm{W}_t)\|]
    \leq (p+1)  
    \begin{dcases}
    \frac{D_1^{''}}{T} + D_2^{''} \eta + \frac{D_3}{\sqrt{b}}
    = O \left( \frac{1}{T} + \eta + \frac{1}{\sqrt{b}} \right) \text{ (without Nesterov)}, \\
    \frac{D_1^{''}}{T} + D_2^{''} \eta + \frac{D_4}{\sqrt{b}} 
    = O \left( \frac{1}{T} + \eta + \frac{1}{\sqrt{b}} \right) \text{ (with Nesterov)},
    \end{dcases}
    \end{align*}
 where $D_1^{''} \coloneqq \eta^{-1} C_1 + \eta C_2 + (1 - \beta )^{-1} C_3$ and $D_2^{''} \coloneqq \frac{C_2}{2p + 1} + \frac{C_4}{1-\beta}$. In particular, for a given $T$, 
    \begin{align*}
    \min_{t \in [0:T-1]} \mathbb{E}[\|\nabla f(\bm{W}_t)\|]
    =
    \begin{dcases}
      \mathcal{O} \left( \frac{1}{\sqrt{T}} \right) &\text{ } \left(\eta = \mathcal{O} \left(\frac{1}{\sqrt{T}} \right) \land b = \mathcal{O}(T) \right), \\
      \mathcal{O} \left( \frac{1}{\sqrt{T}} \right) &\text{ } \left(\eta = \mathcal{O} \left(\frac{1}{T} \right) \land b = \mathcal{O}(T) \right), \\
      \mathcal{O} \left( \frac{1}{T} \right) &\text{ } \left(\eta = \mathcal{O} \left(\frac{1}{T} \right) \land b = \mathcal{O}(T^2) \right).
    \end{dcases}
    \end{align*}
 \item[{\em (vi)}] When using a polynomial decay LR $\eta_t = \eta (1 - \frac{t}{T})^p$ and an exponentially growing BS $b_t = b \delta^t$, for all $T \in \mathbb{N}$,
    \begin{align*}
    \min_{t \in [0:T-1]} \mathbb{E}[\|\nabla f(\bm{W}_t)\|]
    \leq (p+1)  
    \begin{dcases}
    \frac{D_1^{''}}{T} + D_2^{''} \eta + \frac{\sqrt{\delta} D_3}{(\sqrt{\delta} -1)\sqrt{b}T}
    = O \left( \frac{1}{T} + \eta \right) \text{ (without Nesterov)}, \\
    \frac{D_1^{''}}{T} + D_2^{''} \eta + \frac{\sqrt{\delta} D_4}{(\sqrt{\delta} -1)\sqrt{b} T}
    = O \left( \frac{1}{T} + \eta \right) \text{ (with Nesterov)} .
    \end{dcases}
    \end{align*}
 In particular, for a given $T$, 
    \begin{align*}
    \min_{t \in [0:T-1]} \mathbb{E}[\|\nabla f(\bm{W}_t)\|]
    =
    \begin{dcases}
      \mathcal{O} \left( \frac{1}{\sqrt{T}} \right) &\text{ } \left(\eta = \mathcal{O} \left(\frac{1}{\sqrt{T}} \right) \right), \\
      \mathcal{O} \left( \frac{1}{T} \right) &\text{ } \left(\eta = \mathcal{O} \left(\frac{1}{T} \right)\right).
    \end{dcases}
    \end{align*}

\item[{\em (vii)}] When using a diminishing LR $\eta_t \coloneqq \frac{\eta}{\sqrt{t+1}}$ and a constant BS $b_t = b > 0$, for all $T \in \mathbb{N}$, 
    \begin{align*}
    \min_{t \in [0:T-1]} \mathbb{E}[\|\nabla f(\bm{W}_t)\|]
    \leq 
    \begin{dcases}
    \frac{D_1}{2\sqrt{T}} +  \frac{\eta D_2 \log T}{2\sqrt{T}} + \frac{D_3}{\sqrt{b}}
    = O \left( \frac{\log T}{\sqrt{T}} + \frac{1}{\sqrt{b}} \right) \text{ (without Nesterov)}, \\
    \frac{D_1}{2 \sqrt{T}} +  \frac{\eta D_2 \log T}{2 \sqrt{T}} + \frac{D_4}{\sqrt{b}} 
    = O \left( \frac{\log T}{\sqrt{T}} + \frac{1}{\sqrt{b}} \right) \text{ (with Nesterov)} .
    \end{dcases}
    \end{align*}
 In particular, for a given $T$, using $b = \mathcal{O} (T)$ implies that 
    \begin{align*}
    \min_{t \in [0:T-1]} \mathbb{E}[\|\nabla f(\bm{W}_t)\|]
    =
      \mathcal{O} \left( \frac{\log T}{\sqrt{T}} \right).
    \end{align*}
 \item[{\em (viii)}] When using a diminishing LR $\eta_t = \frac{\eta}{\sqrt{t+1}}$ and an exponentially growing BS $b_t = b \delta^t$, for all $T \in \mathbb{N}$,
    \begin{align*}
    \min_{t \in [0:T-1]} \mathbb{E}[\|\nabla f(\bm{W}_t)\|]
    \leq
    \begin{dcases}
    \frac{D_1}{2\sqrt{T}} + \frac{\eta D_2 \log T}{2\sqrt{T}} + \frac{\sqrt{\delta} D_3}{(\sqrt{\delta} -1)\sqrt{b T}} 
    = O \left( \frac{\log T}{\sqrt{T}} \right) \text{ (without Nesterov)}, \\
    \frac{D_1}{2 \sqrt{T}} +  \frac{\eta D_2 \log T}{2 \sqrt{T}} + \frac{\sqrt{\delta} D_4}{(\sqrt{\delta} -1)\sqrt{b T}}
    = O \left( \frac{\log T}{\sqrt{T}} \right) \text{ (with Nesterov)} .
    \end{dcases}
    \end{align*}
\end{enumerate}
\end{cor}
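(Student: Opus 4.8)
The plan is to substitute each schedule pair from \eqref{lr} and \eqref{bs} into the five terms of Theorem \ref{thm:1}(i) (resp. the six terms of Theorem \ref{thm:1}(ii)) and to bound the six ``building blocks'' that Condition \eqref{conditions_1} isolates: the denominator $\sum_{t=0}^{T-1}\eta_t$, together with $\sum_{t=0}^{T-1}\eta_t^2$, $\sum_{t=0}^{T-1}\eta_t\beta^t$, the momentum double sum $\sum_{t=0}^{T-1}\eta_t\sum_{i=1}^t\beta^i\eta_{t-i}$, and the batch-dependent sums $\sum_{t=0}^{T-1}\eta_t\sum_{i=0}^t\beta^i/\sqrt{b_{t-i}}$ and $\sum_{t=0}^{T-1}\eta_t/\sqrt{b_t}$. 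I would first observe that the with-Nesterov bound differs from the without-Nesterov bound only by the replacement $\beta^i\mapsto\beta^{i+1}$ (an extra factor $\beta\le 1$, hence harmless) and by the single extra sixth term; consequently each estimate is carried out once, and for the constant BS the sixth term is absorbed via $\frac{C_5}{(1-\beta)\sqrt b}+\frac{C_6}{\sqrt b}\le\frac{C_5+C_6}{(1-\beta)\sqrt b}=\frac{D_4}{\sqrt b}$, which is exactly why the with-Nesterov constant $D_4$ merely adds $C_6$ to $C_5$.

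The core is a collection of elementary, schedule-specific estimates for $\sum_{t=0}^{T-1}\eta_t$ and $\sum_{t=0}^{T-1}\eta_t^2$. For the constant LR these are exact, $\eta T$ and $\eta^2 T$. For the cosine-annealing LR I would use the closed forms $\sum_{t=0}^{T-1}\cos\frac{t\pi}{T}=1$ and $\sum_{t=0}^{T-1}\cos^2\frac{t\pi}{T}=\frac{T}{2}$, giving $\sum_{t=0}^{T-1}\eta_t=\frac{\eta(T+1)}{2}$ and $\sum_{t=0}^{T-1}\eta_t^2=\frac{3\eta^2 T}{8}+\frac{\eta^2}{2}$, which produce the factors $2$ and $\frac34$ appearing in $D_1'$ and $D_2'$. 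For the polynomial-decay and diminishing LRs I would compare the monotone sums with integrals: $\sum_{t=0}^{T-1}(1-\frac{t}{T})^p\ge\int_0^T(1-\frac{t}{T})^p\,dt=\frac{T}{p+1}$ and $\sum_{t=0}^{T-1}(1-\frac{t}{T})^{2p}\le 1+\frac{T}{2p+1}$, yielding the global factor $(p+1)$ and the coefficient $\frac{C_2}{2p+1}$; and $\sum_{t=0}^{T-1}(t+1)^{-1/2}\ge 2(\sqrt{T+1}-1)$ together with $\sum_{t=0}^{T-1}(t+1)^{-1}\le 1+\log T$, which is what turns the second term into the $\frac{\eta\log T}{\sqrt T}$ contribution. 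In every case the third term is controlled by $\eta_t\le\eta$ and $\sum_{t\ge 0}\beta^t\le\frac1{1-\beta}$, so $\sum_{t=0}^{T-1}\eta_t\beta^t\le\frac{\eta}{1-\beta}$; divided by the growing denominator it decays at the same rate as the first term, which is why $C_1$ and $C_3$ combine into $D_1$.

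Two double sums require more care. For the momentum term I would swap the order of summation, $\sum_{t=0}^{T-1}\eta_t\sum_{i=1}^t\beta^i\eta_{t-i}=\sum_{i=1}^{T-1}\beta^i\sum_{t=i}^{T-1}\eta_t\eta_{t-i}$, use monotonicity $\eta_t\le\eta_{t-i}$ to get $\eta_t\eta_{t-i}\le\eta_{t-i}^2$, reindex the inner sum, and dominate it by $\sum_{s=0}^{T-1}\eta_s^2$; the remaining geometric factor $\sum_{i\ge1}\beta^i\le\frac1{1-\beta}$ then shows the fourth term behaves like the second up to the factor $\frac1{1-\beta}$, explaining $D_2=C_2+\frac{C_4}{1-\beta}$. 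For the batch term with constant BS I would factor out $\frac1{\sqrt b}$ and sum the geometric $\beta$-series to obtain the $\frac{D_3}{\sqrt b}$ (resp. $\frac{D_4}{\sqrt b}$) contribution. The delicate case is the exponentially growing BS: here I would bound $\eta_t\le\eta$ and then decouple the double sum $\sum_{t=0}^{T-1}\sum_{i=0}^t\beta^i\delta^{-(t-i)/2}$ by the substitution $k=t-i$ into a product of two convergent geometric series, $\bigl(\sum_{i\ge0}\beta^i\bigr)\bigl(\sum_{k\ge0}\delta^{-k/2}\bigr)\le\frac{1}{1-\beta}\cdot\frac{\sqrt\delta}{\sqrt\delta-1}$, which is independent of $T$; dividing by the $\Theta(T)$ (resp. $\Theta(\sqrt T)$) denominator then yields the extra $\frac1T$ (resp. $\frac1{\sqrt T}$) decay, so the batch term leaves the leading order and no standalone $\frac1{\sqrt b}$ survives.

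I expect the main obstacle to be precisely these two double sums under a non-constant learning rate: the naive bound $\eta_{t-i}\le\eta$ inside the momentum term would lose a factor of $T$ and destroy the $\frac{\eta\log T}{\sqrt T}$ rate, so the order-swap-plus-monotonicity step is essential, while the exponential-BS decoupling is what produces the improvement over a standalone $\frac1{\sqrt b}$. Once the finite-$T$ upper bounds have been assembled for every schedule pair, the stated $\mathcal O$ rates follow immediately by substituting $\eta=\mathcal O(\frac1{\sqrt T})$ or $\eta=\mathcal O(\frac1T)$ and $b=\mathcal O(T)$ or $b=\mathcal O(T^2)$ into the finite-$T$ bound and reading off the dominant term, verifying in each case that Condition \eqref{conditions_1} is met.
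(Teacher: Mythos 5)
Your proposal is correct and follows essentially the same route as the paper's proof in Appendix A: substitute each schedule into Theorem 3.1, bound $\sum_t\eta_t$ and $\sum_t\eta_t^2$ exactly or by integral comparison, control the momentum double sum by swapping the summation order and using monotonicity of $\eta_t$ to reduce it to $\frac{1}{1-\beta}\sum_k\eta_k^2$, and decouple the exponential-batch-size double sum into a product of two geometric series with total mass $\frac{\sqrt{\delta}}{(1-\beta)(\sqrt{\delta}-1)}$. The only cosmetic difference is that you derive the cosine and polynomial sum estimates directly, whereas the paper cites them from Umeda--Iiduka; the resulting constants $D_1', D_2', D_1'', D_2''$ and all stated rates agree.
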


\subsection{Proof of Theorem \ref{thm:1}}\label{proof_Theorem_1}
We first prove the following lemma.

\begin{lemma}\label{lem:1}
Suppose that Assumption \ref{assum:1} holds. Then, the Muon optimizer satisfies that, for all $t \in \mathbb{N}$,
    \begin{align*}
    f(\bm{W}_t) - f(\bm{W}_{t+1})
    \geq
    \eta_t  \| \nabla f (\bm{W}_t)  \|_{\mathrm{F}}
    - 2 \sqrt{n} \eta_t \| \nabla f(\bm{W}_t) - \bm{C}_t \|_{\mathrm{F}}
    - \frac{n L \eta_t^2}{2}.
\end{align*}
\end{lemma}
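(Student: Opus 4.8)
The plan is to turn the one-step decrease into a lower bound by feeding the Muon update into the descent lemma \eqref{descent_lemma} and then exploiting the variational characterization of the orthogonalization in Step 9 of Algorithm \ref{muon}. First I would apply \eqref{descent_lemma} with $\bm{W}_1 = \bm{W}_{t+1}$ and $\bm{W}_2 = \bm{W}_t$ and substitute the update $\bm{W}_{t+1} = \bm{W}_t - \eta_t \bm{O}_t$, which yields
\begin{align*}
f(\bm{W}_t) - f(\bm{W}_{t+1}) \geq \eta_t\, \nabla f(\bm{W}_t) \bullet \bm{O}_t - \frac{L \eta_t^2}{2} \|\bm{O}_t\|_{\mathrm{F}}^2 .
\end{align*}
Because $\bm{O}_t$ satisfies $\bm{O}_t^\top \bm{O}_t = \bm{I}_n$ (so that $m \geq n$ is implicitly needed for feasibility), one has $\|\bm{O}_t\|_{\mathrm{F}}^2 = \Tr(\bm{O}_t^\top \bm{O}_t) = n$, which exactly produces the term $-\frac{n L \eta_t^2}{2}$. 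It then remains to show the inner-product bound $\nabla f(\bm{W}_t) \bullet \bm{O}_t \geq \|\nabla f(\bm{W}_t)\|_{\mathrm{F}} - 2\sqrt{n}\,\|\nabla f(\bm{W}_t) - \bm{C}_t\|_{\mathrm{F}}$.

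The key step uses the orthogonalization structure. Writing the thin singular value decomposition $\bm{C}_t = \bm{U}\bm{\Sigma}\bm{V}^\top$, the minimizer in Step 9 is the polar factor $\bm{O}_t = \bm{U}\bm{V}^\top$; indeed, expanding $\|\bm{O} - \bm{C}_t\|_{\mathrm{F}}^2 = n - 2\,\bm{O}\bullet\bm{C}_t + \|\bm{C}_t\|_{\mathrm{F}}^2$ shows that minimizing the distance over $\bm{O}^\top\bm{O} = \bm{I}_n$ is equivalent to maximizing $\bm{O}\bullet\bm{C}_t$, and by the von Neumann trace inequality this maximum equals $\sum_i \sigma_i(\bm{C}_t)$, so $\bm{C}_t \bullet \bm{O}_t = \sum_i \sigma_i(\bm{C}_t)$. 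I would then split $\nabla f(\bm{W}_t) = \bm{C}_t + (\nabla f(\bm{W}_t) - \bm{C}_t)$ and apply Cauchy--Schwarz together with $\|\bm{O}_t\|_{\mathrm{F}} = \sqrt{n}$ to obtain
\begin{align*}
\nabla f(\bm{W}_t)\bullet \bm{O}_t = \sum_i \sigma_i(\bm{C}_t) + (\nabla f(\bm{W}_t) - \bm{C}_t)\bullet \bm{O}_t \geq \sum_i \sigma_i(\bm{C}_t) - \sqrt{n}\,\|\nabla f(\bm{W}_t) - \bm{C}_t\|_{\mathrm{F}} .
\end{align*}

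Finally I would convert the sum of singular values of $\bm{C}_t$ back into the Frobenius norm of $\nabla f(\bm{W}_t)$. Using the reverse triangle inequality for the nuclear norm together with the elementary two-sided equivalence $\|\bm{A}\|_{\mathrm{F}} \leq \sum_i \sigma_i(\bm{A}) \leq \sqrt{n}\,\|\bm{A}\|_{\mathrm{F}}$ (valid since each matrix has at most $n$ nonzero singular values), I get $\sum_i \sigma_i(\bm{C}_t) \geq \|\nabla f(\bm{W}_t)\|_{\mathrm{F}} - \sqrt{n}\,\|\nabla f(\bm{W}_t) - \bm{C}_t\|_{\mathrm{F}}$; combining this with the previous display collects the two $\sqrt{n}$ contributions into the coefficient $2\sqrt{n}$ and finishes the proof. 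The hard part will be managing this mismatch between the nuclear norm that the orthogonalization naturally produces and the Frobenius norm appearing in the statement: the two conversions (applying the lower and upper equivalence to $\nabla f(\bm{W}_t)$ and to $\nabla f(\bm{W}_t) - \bm{C}_t$ respectively, plus the reverse triangle inequality) are exactly what force the factor $2\sqrt{n}$, and one must keep the directions of these inequalities consistent so that the error term is genuinely subtracted rather than added.
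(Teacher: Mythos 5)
Your proposal is correct and follows essentially the same route as the paper: descent lemma with $\|\bm{O}_t\|_{\mathrm{F}}^2 = n$, the identification of $\bm{C}_t \bullet \bm{O}_t$ with the nuclear norm of $\bm{C}_t$ via the variational characterization of the orthogonal factor, Cauchy--Schwarz on the residual term, and the reverse triangle inequality plus the Frobenius--nuclear norm equivalence to produce the $2\sqrt{n}$ coefficient. The only cosmetic difference is that you write the nuclear norm explicitly as $\sum_i \sigma_i(\bm{C}_t)$ and invoke von Neumann's trace inequality, whereas the paper denotes it $\|\cdot\|_*$ and uses the bound $\|\bm{X}\|_{\mathrm{F}} \leq \|\bm{X}\|_* \leq \sqrt{\operatorname{rank}(\bm{X})}\,\|\bm{X}\|_{\mathrm{F}}$.
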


\begin{proof}
From \eqref{descent_lemma}, we have 
\begin{align*}
f(\bm{W}_{t+1})
\le f(\bm{W}_t)
+ \nabla f(\bm{W}_t) \bullet (\bm{W}_{t+1} - \bm{W}_t)
+ \frac{L}{2}\|\bm{W}_{t+1} - \bm{W}_t\|_{\mathrm{F}}^2,
\end{align*}
which, together with $\bm{W}_{t+1} - \bm{W}_t = - \eta_t \bm{O}_t$ and $\|\bm{O}_t\|_{\mathrm{F}} = \sqrt{n}$, implies that
\begin{align}\label{ineq:1}
\begin{split}
f(\bm{W}_t) - f(\bm{W}_{t+1})
&\ge
- \nabla f(\bm{W}_t) \bullet (\bm{W}_{t+1} - \bm{W}_t)
- \frac{L}{2} \|\bm{W}_{t+1} - \bm{W}_t\|_{\mathrm{F}}^2 \\
&=
\eta_t \nabla f(\bm{W}_t) \bullet \bm{O}_t
- \frac{L}{2} \eta_t^2 \|\bm{O}_t\|_{\mathrm{F}}^2 \\
&=
\eta_t 
\underbrace{\bm{C}_t \bullet \bm{O}_t}_{A_t}
+ \eta_t \underbrace{(\nabla f(\bm{W}_t) - \bm{C}_t) \bullet \bm{O}_t}_{B_t}
- \frac{n L \eta_t^2}{2}.
\end{split}
\end{align}
Since we have
\begin{align*}
\bm{O}_t
&\coloneqq \arg\min
\left\{ \| \bm{O} - \bm{C}_t\|_{\mathrm{F}}^2 \colon \bm{O}^\top \bm{O} = \bm{I}_n \right\} \\
&= \arg\min_{O}
\left\{ \|\bm{O}\|_{\mathrm{F}}^2 
- 2 \bm{C}_t \bullet \bm{O} + \|\bm{C}_t\|_{\mathrm{F}}^2 \colon \bm{O}^\top \bm{O} = \bm{I}_n \right\},
\end{align*}
the definition of the dual norm $\|\cdot\|_*$ implies that 
\begin{align*} 
    A_t \coloneqq   
    \bm{C}_t \bullet \bm{O}_t 
    = \max \left\{
    \bm{C}_t \bullet \bm{O} \colon \bm{O}^\top \bm{O} = \bm{I}_n \right\}
    = \|\bm{C}_t \|_*. 
\end{align*}
The triangle inequality and the relation $\|\bm{X}\|_{\mathrm{F}} \leq \|\bm{X}\|_* \leq \sqrt{\mathrm{rank}(\bm{X})} \|\bm{X}\|_{\mathrm{F}}$ thus imply that
\begin{align}\label{ineq:2}
\begin{split}
   A_t 
   &= \|\bm{C}_t \|_*
   = \|\nabla f (\bm{W}_t) + (\bm{C}_t - \nabla f (\bm{W}_t))   \|_*
   \geq 
   \| \nabla f (\bm{W}_t)  \|_*
   - 
   \|\bm{C}_t - \nabla f (\bm{W}_t)\|_*\\
   &\geq
   \| \nabla f (\bm{W}_t)  \|_{\mathrm{F}}
   - \sqrt{n} \|\bm{C}_t - \nabla f (\bm{W}_t)\|_{\mathrm{F}}.
\end{split}
\end{align}
Moreover, the Cauchy--Schwartz inequality implies 
\begin{align}\label{ineq:3}
\begin{split}
    B_t \coloneqq (\nabla f(\bm{W}_t) - \bm{C}_t) \bullet \bm{O}_t
    \geq 
    - \| \nabla f(\bm{W}_t) - \bm{C}_t \|_{\mathrm{F}} \| \bm{O}_t \|_{\mathrm{F}}
    = - \sqrt{n} \| \nabla f(\bm{W}_t) - \bm{C}_t \|_{\mathrm{F}}.
\end{split}    
\end{align}
Form \eqref{ineq:1}, \eqref{ineq:2}, and \eqref{ineq:3}, 
\begin{align*}
    &f(\bm{W}_t) - f(\bm{W}_{t+1})\\
    &\geq
    \eta_t 
    \left\{ \| \nabla f (\bm{W}_t)  \|_{\mathrm{F}}
    - \sqrt{n} \|\bm{C}_t - \nabla f (\bm{W}_t)\|_{\mathrm{F}} \right\}
    - \sqrt{n} \eta_t \| \nabla f(\bm{W}_t) - \bm{C}_t \|_{\mathrm{F}}
    - \frac{n L \eta_t^2}{2}\\
    &= 
    \eta_t  \| \nabla f (\bm{W}_t)  \|_{\mathrm{F}}
    - 2 \sqrt{n} \eta_t \| \nabla f(\bm{W}_t) - \bm{C}_t \|_{\mathrm{F}}
    - \frac{n L \eta_t^2}{2},
\end{align*}
which completes the proof. 
\end{proof}

Next, we give the following lemma in order to evaluate $\| \nabla f(\bm{W}_t) - \bm{C}_t \|$ in Lemma \ref{lem:1}.

\begin{lemma}\label{lem:2}
Suppose that Assumption \ref{assum:1} holds and consider the Muon optimizer. Then, 
    \begin{enumerate}
\item[{\em (i)}] Muon without Nesterov ($\bm{C}_t = \bm{M}_t$) satisfies that, for all $t\in \mathbb{N}$,
         \begin{align*}
         \mathbb{E} \left[ \| \nabla f(\bm{W}_t) - \bm{M}_t \|_{\mathrm{F}} \right]
         \leq 
         \beta^t \|\bm{M}_0 - \nabla f(\bm{W}_0)\|_{\mathrm{F}} + L \sqrt{n} \sum_{i=1}^{t}\beta^i \eta_{t-i}
        +(1-\beta)\sigma \sum_{i=0}^{t} \frac{\beta^i}{\sqrt{b_{t-i}}}.
         \end{align*}
\item[{\em (ii)}] Muon with Nesterov ($\bm{C}_t = \beta\bm{M}_t + (1-\beta) \nabla f_{B_t} (\bm{W}_t)$) satisfies that, for all $t\in \mathbb{N}$,
         \begin{align*}
         &\mathbb{E} \left[ \| \nabla f(\bm{W}_t) - \bm{C}_t \|_{\mathrm{F}} \right]\\
         &\leq 
         \beta^{t+1} \|\bm{M}_0 - \nabla f(\bm{W}_0)\|_{\mathrm{F}}]
        + \beta L\sqrt{n} \sum_{i=1}^t \beta^i \eta_{t-i} 
        + \beta(1-\beta)\sigma \sum_{i=0}^t \frac{\beta^i}{\sqrt{b_{t-i}}}
        + \frac{(1-\beta)\sigma}{\sqrt{b_t}}.
         \end{align*}
    \end{enumerate}
\end{lemma}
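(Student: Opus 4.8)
The plan is to control the momentum error $\bm{E}_t \coloneqq \bm{M}_t - \nabla f(\bm{W}_t)$ through a one-step recursion, unroll it into a geometric series, and bound each resulting term in expectation. First I would subtract $\nabla f(\bm{W}_t)$ from both sides of the update $\bm{M}_t = \beta \bm{M}_{t-1} + (1-\beta)\nabla f_{B_t}(\bm{W}_t)$, then add and subtract $\beta \nabla f(\bm{W}_{t-1})$ and regroup, to obtain
\begin{align*}
\bm{E}_t = \beta \bm{E}_{t-1} + (1-\beta)\underbrace{(\nabla f_{B_t}(\bm{W}_t) - \nabla f(\bm{W}_t))}_{\bm{Z}_t} - \beta\underbrace{(\nabla f(\bm{W}_t) - \nabla f(\bm{W}_{t-1}))}_{\bm{\Delta}_t}.
\end{align*}
Here $\bm{Z}_t$ is the mean-zero mini-batch noise and $\bm{\Delta}_t$ is the gradient drift between consecutive iterates.

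Unrolling from $\bm{E}_0 = \bm{M}_0 - \nabla f(\bm{W}_0)$ gives $\bm{E}_t = \beta^t \bm{E}_0 + \sum_{k=1}^t \beta^{t-k}\left[(1-\beta)\bm{Z}_k - \beta\bm{\Delta}_k\right]$. Applying the triangle inequality and then the total expectation, I would bound the three groups separately. The drift term is handled by Assumption \ref{assum:1}(i): since $\bm{W}_k - \bm{W}_{k-1} = -\eta_{k-1}\bm{O}_{k-1}$ and $\|\bm{O}_{k-1}\|_{\mathrm{F}} = \sqrt{n}$ (the orthogonal factor has $n$ unit singular values, as already used in Lemma \ref{lem:1}), $L$-smoothness yields $\|\bm{\Delta}_k\|_{\mathrm{F}} \le L\eta_{k-1}\sqrt{n}$; re-indexing the geometric sum produces the term $L\sqrt{n}\sum_{i=1}^t \beta^i \eta_{t-i}$. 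For the noise term I would use that $\bm{\xi}_k$ is independent of $\bm{W}_k$: conditioning on the history and applying \eqref{assum_ii_2} gives $\mathbb{E}_{\bm{\xi}_k}[\|\bm{Z}_k\|_{\mathrm{F}}^2] \le \sigma^2/b_k$, and then Jensen's inequality $\mathbb{E}\|\bm{Z}_k\|_{\mathrm{F}} \le \sqrt{\mathbb{E}\|\bm{Z}_k\|_{\mathrm{F}}^2}$ together with the tower property yields $\mathbb{E}\|\bm{Z}_k\|_{\mathrm{F}} \le \sigma/\sqrt{b_k}$; re-indexing gives the term $(1-\beta)\sigma\sum_{i}\beta^i/\sqrt{b_{t-i}}$. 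Collecting the three groups, together with $\mathbb{E}\|\bm{E}_0\|_{\mathrm{F}} = \|\bm{M}_0 - \nabla f(\bm{W}_0)\|_{\mathrm{F}}$, proves part (i).

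For part (ii) I would avoid re-deriving everything: subtracting $\nabla f(\bm{W}_t) = \beta\nabla f(\bm{W}_t)+(1-\beta)\nabla f(\bm{W}_t)$ from $\bm{C}_t = \beta\bm{M}_t + (1-\beta)\nabla f_{B_t}(\bm{W}_t)$ gives the clean identity $\bm{C}_t - \nabla f(\bm{W}_t) = \beta\bm{E}_t + (1-\beta)\bm{Z}_t$. Taking norms and total expectations, then inserting the part-(i) bound for $\mathbb{E}\|\bm{E}_t\|_{\mathrm{F}}$ and $\mathbb{E}\|\bm{Z}_t\|_{\mathrm{F}} \le \sigma/\sqrt{b_t}$, reproduces part (ii): every part-(i) term is scaled by $\beta$ (yielding $\beta^{t+1}$, $\beta L\sqrt{n}\sum_i\beta^i\eta_{t-i}$, and $\beta(1-\beta)\sigma\sum_i\beta^i/\sqrt{b_{t-i}}$), plus the extra $(1-\beta)\sigma/\sqrt{b_t}$ coming from the undamped current-noise term.

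The main obstacle is the noise term: the summands $\bm{Z}_k$ are \emph{not} independent, since the iterate $\bm{W}_k$ depends on all earlier noise, so one cannot pass to a variance-of-a-sum estimate. The decisive move is to apply the triangle inequality \emph{before} taking expectations, so that only the first moment $\mathbb{E}\|\bm{Z}_k\|_{\mathrm{F}}$ of each individual term is needed, and to control that first moment by conditioning on the past — where $\bm{\xi}_k$ is independent of $\bm{W}_k$, making \eqref{assum_ii_2} applicable — followed by Jensen. Keeping the geometric factor $\beta^{t-k}$ intact rather than summing it is what preserves the $\sum_i \beta^i/\sqrt{b_{t-i}}$ structure that Corollary \ref{cor:1} later exploits to obtain the concrete rates.
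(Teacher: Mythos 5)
Your proposal is correct and follows essentially the same route as the paper: the one-step recursion you obtain by adding and subtracting $\beta \nabla f(\bm{W}_{t-1})$ is exactly the decomposition behind the paper's inequality, the drift and noise terms are bounded identically (via $\|\bm{O}_{t-1}\|_{\mathrm{F}}=\sqrt{n}$ and Jensen with \eqref{assum_ii_2}), and part (ii) uses the same $\beta$-scaling of part (i) plus the extra $(1-\beta)\sigma/\sqrt{b_t}$ term. The only cosmetic difference is that your unrolled noise sum stops at $i=t-1$ rather than $i=t$, which is a slightly tighter bound that still implies the stated one.
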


\begin{proof}
The definition of $\bm{M}_t$ and the triangle inequality imply that
\begin{align}\label{ineq:4}
\begin{split}
\| \bm{M}_t - \nabla f(\bm{W}_t) \|_{\mathrm{F}}
&\leq 
\beta \| \bm{M}_{t-1} - \nabla f(\bm{W}_{t-1}) \|_{\mathrm{F}} 
+ \beta \| \nabla f(\bm{W}_{t-1}) -\nabla f(\bm{W}_t) \|_{\mathrm{F}} \\
&\quad +(1-\beta) \| \nabla f_{B_t}(\bm{W}_t)-\nabla f(\bm{W}_t)\|_{\mathrm{F}}.
\end{split}
\end{align}
From $\bm{W}_{t} - \bm{W}_{t-1} = - \eta_{t-1} \bm{O}_{t-1}$ and Assumption \ref{assum:1}(i), we have $\| \nabla f(\bm{W}_{t-1}) -\nabla f(\bm{W}_t) \|_{\mathrm{F}} \leq L \|\bm{W}_{t-1} - \bm{W}_t\|_{\mathrm{F}} = L \sqrt{n} \eta_{t-1}$. Moreover, \eqref{assum_ii_2} implies that
\begin{align*}
    \mathbb{E}_{\bm{\xi}_t}\left[ \| \nabla f_{B_t}(\bm{W}_t)-\nabla f(\bm{W}_t)\|_{\mathrm{F}} \right]
    \leq \sqrt{\mathbb{V}_{\bm{\xi}_t}[\nabla f_{B_t}(\bm{W}_t)]}
    \leq \frac{\sigma}{\sqrt{b_t}}.
\end{align*}
Accordingly, taking the total expectation in \eqref{ineq:4} implies that 
\begin{align*}
\mathbb{E}[\|\bm{M}_t - \nabla f(\bm{W}_t)\|_{\mathrm{F}}] 
\le
\beta\mathbb{E}[\|\bm{M}_{t-1} - \nabla f(\bm{W}_{t-1})\|_{\mathrm{F}}]
+ \beta L \sqrt{n} \eta_{t-1} 
+ \frac{(1-\beta)\sigma}{\sqrt{b_t}}. 
\end{align*}
Induction thus implies that 
\begin{align*}
\mathbb{E}[\|\bm{M}_t - \nabla f(\bm{W}_t)\|_{\mathrm{F}}] 
&\le
\beta^t \|\bm{M}_0 - \nabla f(\bm{W}_0)\|_{\mathrm{F}} + L \sqrt{n} \sum_{i=1}^{t}\beta^i \eta_{t-i}
+(1-\beta)\sigma \sum_{i=0}^{t} \frac{\beta^i}{\sqrt{b_{t-i}}}. 
\end{align*}

(i) From $\bm{C}_t = \bm{M}_t$, we have Lemma \ref{lem:2}(i). 

(ii) From $\bm{C}_t = \beta\bm{M}_t + (1-\beta) \nabla f_{B_t} (\bm{W}_t)$, 
\begin{align*}
\|\bm{C}_t - \nabla f(\bm{W}_t)\| 
&\leq
\beta \|\bm{M}_t - \nabla f(\bm{W}_t)\| + (1-\beta)\|\nabla f_{B_t}(\bm{W}_t) - \nabla f(\bm{W}_t)\|_{\mathrm{F}},
\end{align*}
which, together with Lemma \ref{lem:2}(i), implies that
\begin{align*}
&\mathbb{E}[ \|\bm{C}_t - \nabla f(\bm{W}_t)\|_{\mathrm{F}}]\\
&\leq
\beta 
\left\{ 
\beta^t \|\bm{M}_0 - \nabla f(\bm{W}_0)\|_{\mathrm{F}} + L \sqrt{n} \sum_{i=1}^{t}\beta^i \eta_{t-i}
+(1-\beta)\sigma \sum_{i=0}^{t} \frac{\beta^i}{\sqrt{b_{t-i}}}
\right\}
+ 
(1 - \beta) \frac{\sigma}{\sqrt{b_t}}\\
&=
\beta^{t+1} \|\bm{M}_0 - \nabla f(\bm{W}_0)\|_{\mathrm{F}}
+ \beta L\sqrt{n} \sum_{i=1}^t \beta^i \eta_{t-i} 
+ \beta(1-\beta)\sigma \sum_{i=0}^t \frac{\beta^i}{\sqrt{b_{t-i}}}
+ \frac{(1-\beta)\sigma}{\sqrt{b_t}},
\end{align*}
which completes the proof. 
\end{proof}

We are now in a position to prove Theorem \ref{thm:1}. 

\begin{proof}\textbf{of Theorem} \ref{thm:1}
    
(i) Lemma \ref{lem:1} and Lemma \ref{lem:2}(i) imply that, for all $t \in \mathbb{N}$,
\begin{align*}
&\eta_t \mathbb{E}[\|\nabla f(\bm{W}_t)\|_{\mathrm{F}}]\\ 
&\le
\mathbb{E} [ f(\bm{W}_t) - f(\bm{W}_{t+1}) ] +\frac{nL\eta_t^2}{2}\\
&\quad 
+ 2 \sqrt{n} \eta_t 
\left\{
\beta^t \|\bm{M}_0 - \nabla f(\bm{W}_0)\|_{\mathrm{F}} 
+ L \sqrt{n} \sum_{i=1}^{t}\beta^i \eta_{t-i}
+ (1-\beta)\sigma \sum_{i=0}^{t} \frac{\beta^i}{\sqrt{b_{t-i}}}
\right\}\\
&= 
\mathbb{E} [ f(\bm{W}_t) - f(\bm{W}_{t+1}) ] +\frac{nL\eta_t^2}{2}
+ 2 \|\bm{M}_0 - \nabla f(\bm{W}_0)\|_{\mathrm{F}} \sqrt{n} \eta_t \beta^t\\
&\quad 
+ 2 L n \eta_t \sum_{i=1}^{t}\beta^i \eta_{t-i}
+ 2 (1-\beta)\sigma \sqrt{n} \eta_t \sum_{i=0}^{t} \frac{\beta^i}{\sqrt{b_{t-i}}}.
\end{align*}
Let $T \in \mathbb{N}$. Summing the above inequality from $t=0$ to $t = T-1$ ensures that
\begin{align*}
\sum_{t=0}^{T-1} \eta_t \mathbb{E}[\|\nabla f(\bm{W}_t)\|_{\mathrm{F}}] 
&\leq
f(\bm{W}_0) - f^\star +\frac{nL}{2} \sum_{t=0}^{T-1} \eta_t^2
+ 2 \|\bm{M}_0 - \nabla f(\bm{W}_0)\|_{\mathrm{F}} \sqrt{n} \sum_{t=0}^{T-1} \eta_t \beta^t\\
&\quad 
+ 2 L n \sum_{t=0}^{T-1} \eta_t \sum_{i=1}^{t}\beta^i \eta_{t-i}
+ 2 (1-\beta)\sigma \sqrt{n} \sum_{t=0}^{T-1} \eta_t \sum_{i=0}^{t} \frac{\beta^i}{\sqrt{b_{t-i}}},
\end{align*}
which, together with $\min_{t \in [0:T-1]} \mathbb{E}[\|\nabla f(\bm{W}_t)\|_{\mathrm{F}}] \leq \sum_{t=0}^{T-1} \eta_t \mathbb{E}[\|\nabla f(\bm{W}_t)\|_{\mathrm{F}}]/\sum_{t=0}^{T-1} \eta_t$, implies that
\begin{align*}
\min_{t \in [0:T-1]} \mathbb{E}[\|\nabla f(\bm{W}_t)\|_{\mathrm{F}}]
&\leq
\frac{f(\bm{W}_0) - f^\star}{\sum_{t=0}^{T-1} \eta_t} +\frac{nL}{2} \frac{\sum_{t=0}^{T-1} \eta_t^2}{\sum_{t=0}^{T-1} \eta_t}
+ 2 \|\bm{M}_0 - \nabla f(\bm{W}_0)\|_{\mathrm{F}} \sqrt{n} \frac{\sum_{t=0}^{T-1} \eta_t \beta^t}{\sum_{t=0}^{T-1} \eta_t}\\
&\quad 
+ 2 L n 
\frac{\sum_{t=0}^{T-1} \eta_t \sum_{i=1}^{t}\beta^i \eta_{t-i}}{\sum_{t=0}^{T-1} \eta_t}
+ 2 (1-\beta)\sigma \sqrt{n} 
\frac{\sum_{t=0}^{T-1} \eta_t \sum_{i=0}^{t} \frac{\beta^i}{\sqrt{b_{t-i}}}}{\sum_{t=0}^{T-1} \eta_t}.
\end{align*}

(ii) Lemma \ref{lem:1} and Lemma \ref{lem:2}(ii) imply that, for all $t \in \mathbb{N}$,
\begin{align*}
&\eta_t \mathbb{E}[\|\nabla f(\bm{W}_t)\|_{\mathrm{F}}]\\ 
&\le
\mathbb{E} [ f(\bm{W}_t) - f(\bm{W}_{t+1}) ] +\frac{nL\eta_t^2}{2}\\
&\quad 
+ 2 \sqrt{n} \eta_t 
\left\{
\beta^{t+1} \|\bm{M}_0 - \nabla f(\bm{W}_0)\|_{\mathrm{F}}]
        + \beta L\sqrt{n} \sum_{i=1}^t \beta^i \eta_{t-i} 
        + \beta(1-\beta)\sigma \sum_{i=0}^t \frac{\beta^i}{\sqrt{b_{t-i}}}
        + \frac{(1-\beta)\sigma}{\sqrt{b_t}}
\right\}\\
&= 
\mathbb{E} [ f(\bm{W}_t) - f(\bm{W}_{t+1}) ] +\frac{nL\eta_t^2}{2}
+ 2 \|\bm{M}_0 - \nabla f(\bm{W}_0)\|_{\mathrm{F}} \sqrt{n} \eta_t \beta^{t+1}\\
&\quad 
+ 2 \beta L n \eta_t \sum_{i=1}^{t}\beta^i \eta_{t-i}
+ 2 \beta (1-\beta)\sigma \sqrt{n} \eta_t \sum_{i=0}^{t} \frac{\beta^i}{\sqrt{b_{t-i}}}
+  \frac{2 \sqrt{n} (1-\beta)\sigma \eta_t}{\sqrt{b_t}}.
\end{align*}
A discussion similar to the one proving Theorem \ref{thm:1}(i) implies that Theorem \ref{thm:1}(ii) holds. This completes the proof.
\end{proof}

\section{Conclusion}
We presented a comprehensive convergence analysis of the Muon optimizer. By investigating combinations of Nesterov acceleration, four types of learning rates, and two batch-size configurations, we derived improved convergence rates under significantly weaker assumptions compared with existing studies. Our analysis highlights that the combination of a diminishing learning rate and an exponentially growing batch size yields the most favorable results in terms of both stability and convergence rate. Furthermore, we demonstrated that by appropriately coupling the learning rate and batch size as functions of the total number of iterations $T$, superior convergence rates can also be achieved across other combinations. These findings provide a theoretical foundation for more efficient hyperparameter tuning in Muon-based optimization.

\bibliographystyle{tmlr}
\bibliography{Muon_convergence}
\clearpage
\appendix

\section{Proof of Corollary \ref{cor:1}}\label{proof_cor_1}

\begin{proof}
(i) When using $\eta_t \coloneqq \eta$ and $b_t \coloneqq b$, we have 
\begin{align*}
&\frac{1}{\sum_{t=0}^{T-1} \eta_t} = \frac{1}{\eta T}, \text{ }
\frac{\sum_{t=0}^{T-1} \eta_t^2}{\sum_{t=0}^{T-1} \eta_t} = \eta, \text{ }
\frac{\sum_{t=0}^{T-1} \eta_t \beta^{t}}{\sum_{t=0}^{T-1} \eta_t}
= \frac{1}{T} \sum_{t=0}^{T-1} \beta^t
\leq \frac{1}{(1 - \beta)T},\\ 
&\frac{\sum_{t=0}^{T-1} \eta_t \sum_{i=1}^{t} \beta^{i} \eta_{t-i}}{\sum_{t=0}^{T-1} \eta_t}
= \frac{\eta}{T} \sum_{t=0}^{T-1} \sum_{i=1}^t \beta^i = \frac{\eta}{T} \sum_{t=0}^{T-1} \frac{\beta(1 - \beta^t)}{1 - \beta}
\leq \frac{\eta}{1-\beta}, \\
&\frac{\sum_{t=0}^{T-1} \eta_t \sum_{i=0}^{t} \frac{\beta^i}{\sqrt{b_{t-i}}}}{\sum_{t=0}^{T-1} \eta_t}
= 
\frac{1}{T \sqrt{b}} \sum_{t=0}^{T-1} \sum_{i=0}^{t} \beta^i
\leq 
\frac{1}{(1 - \beta) \sqrt{b}}, \text{ }
\frac{\sum_{t=0}^{T-1} \frac{\eta_t}{\sqrt{b_t}}}{\sum_{t=0}^{T-1} \eta_t}
= \frac{1}{\sqrt{b}}.
\end{align*}
Theorem \ref{thm:1} thus leads to Corollary \ref{cor:1}(i).

(ii) When using $\eta_t \coloneqq \eta$ and $b_t \coloneqq b \delta^t$, we have 
\begin{align*}
&\frac{\sum_{t=0}^{T-1} \eta_t \sum_{i=0}^{t} \frac{\beta^i}{\sqrt{b_{t-i}}}}{\sum_{t=0}^{T-1} \eta_t}
= 
\frac{1}{T \sqrt{b}} \sum_{t=0}^{T-1} \sum_{i=0}^{t} \frac{\beta^i}{\sqrt{\delta^{t-i}}}
\leq 
\frac{1}{(1-\beta)T \sqrt{b}} \sum_{t=0}^{T-1} 
\frac{1}{\sqrt{\delta^t}}
\leq 
\frac{\sqrt{\delta}}{(1-\beta)(\sqrt{\delta} - 1)T \sqrt{b}},\\ 
&\frac{\sum_{t=0}^{T-1} \frac{\eta_t}{\sqrt{b_t}}}{\sum_{t=0}^{T-1} \eta_t}
= 
\frac{1}{T \sqrt{b}} \sum_{t=0}^{T-1} \frac{1}{\sqrt{\delta^t}}
\leq 
\frac{\sqrt{\delta}}{(\sqrt{\delta} - 1)T \sqrt{b}}.
\end{align*}
Hence, Theorem \ref{thm:1} thus leads to Corollary \ref{cor:1}(ii).

(iii) Section A.3 in \citep{umeda2025increasing} ensures that $\eta_t = \frac{\eta}{2} (1 + \cos \frac{t \pi}{T})$ satisfies 
\begin{align*}
\sum_{t=0}^{T-1} \eta_t \geq \frac{\eta T}{2}
\text{ and }
\sum_{t=0}^{T-1} \eta_t^2 = \frac{3 \eta^2 T}{8} + \frac{\eta^2}{2}.
\end{align*}
Hence, 
\begin{align*}
\frac{1}{\sum_{t=0}^{T-1} \eta_t}
\leq 
\frac{2}{\eta T}
\text{ and }
\frac{\sum_{t=0}^{T-1} \eta_t^2}{\sum_{t=0}^{T-1} \eta_t}
\leq 
\frac{\eta}{T} + \frac{3 \eta}{4}.
\end{align*}
Using $b_t = b$ implies that 
\begin{align*}
&\frac{\sum_{t=0}^{T-1} \eta_t \beta^{t}}{\sum_{t=0}^{T-1} \eta_t}
\leq \frac{2}{T} \sum_{t=0}^{T-1} \beta^t
\leq \frac{2}{(1 - \beta)T},\\ 
&\frac{\sum_{t=0}^{T-1} \eta_t \sum_{i=1}^{t} \beta^{i} \eta_{t-i}}{\sum_{t=0}^{T-1} \eta_t}
\leq \frac{2 \eta}{T} \sum_{t=0}^{T-1} \sum_{i=1}^t \beta^i = \frac{2 \eta}{T} \sum_{t=0}^{T-1} \frac{\beta(1 - \beta^t)}{1 - \beta}
\leq \frac{2 \eta}{1-\beta}.
\end{align*}
Moreover, 
\begin{align*}
\frac{\sum_{t=0}^{T-1} \eta_t \sum_{i=0}^{t} \frac{\beta^i}{\sqrt{b_{t-i}}}}{\sum_{t=0}^{T-1} \eta_t}
\leq
\frac{2}{T \sqrt{b}} \sum_{t=0}^{T-1} \sum_{i=0}^{t} \beta^i
\leq 
\frac{2}{(1 - \beta) \sqrt{b}} \text{ and }
\frac{\sum_{t=0}^{T-1} \frac{\eta_t}{\sqrt{b_t}}}{\sum_{t=0}^{T-1} \eta_t}
= \frac{1}{\sqrt{b}}.
\end{align*}
Hence, Theorem \ref{thm:1} thus leads to Corollary \ref{cor:1}(iii).

(iv) Using the proof of Corollaries \ref{cor:1}(ii) and (iii) leads to Corollary \ref{cor:1}(iv).

(v) Section A.3 in \citep{umeda2025increasing} ensures that $\eta_t = \eta (1 - \frac{t}{T})^p$ satisfies 
\begin{align*}
\sum_{t=0}^{T-1} \eta_t \geq \frac{\eta T}{p + 1}  
\text{ and }
\sum_{t=0}^{T-1} \eta_t^2 \leq \frac{\eta^2 (2p + T + 1)}{2p + 1}.
\end{align*}
Hence, 
\begin{align*}
\frac{1}{\sum_{t=0}^{T-1} \eta_t}
\leq 
\frac{p+1}{\eta T}
\text{ and }
\frac{\sum_{t=0}^{T-1} \eta_t^2}{\sum_{t=0}^{T-1} \eta_t}
\leq 
\frac{(p+1)(2p+T+1)\eta}{(2p+1)T}.
\end{align*}
Using $b_t = b$ implies that 
\begin{align*}
&\frac{\sum_{t=0}^{T-1} \eta_t \beta^{t}}{\sum_{t=0}^{T-1} \eta_t}
\leq \frac{p+1}{T} \sum_{t=0}^{T-1} \beta^t
\leq \frac{p+1}{(1 - \beta)T},\\ 
&\frac{\sum_{t=0}^{T-1} \eta_t \sum_{i=1}^{t} \beta^{i} \eta_{t-i}}{\sum_{t=0}^{T-1} \eta_t}
\leq \frac{(p+1) \eta}{T} \sum_{t=0}^{T-1} \sum_{i=1}^t \beta^i = \frac{(p+1) \eta}{T} \sum_{t=0}^{T-1} \frac{\beta(1 - \beta^t)}{1 - \beta}
\leq \frac{(p+1) \eta}{1-\beta}.
\end{align*}
Moreover, 
\begin{align*}
\frac{\sum_{t=0}^{T-1} \eta_t \sum_{i=0}^{t} \frac{\beta^i}{\sqrt{b_{t-i}}}}{\sum_{t=0}^{T-1} \eta_t}
\leq
\frac{p+1}{T \sqrt{b}} \sum_{t=0}^{T-1} \sum_{i=0}^{t} \beta^i
\leq 
\frac{p+1}{(1 - \beta) \sqrt{b}} \text{ and }
\frac{\sum_{t=0}^{T-1} \frac{\eta_t}{\sqrt{b_t}}}{\sum_{t=0}^{T-1} \eta_t}
= \frac{1}{\sqrt{b}}.
\end{align*}
Hence, Theorem \ref{thm:1} thus leads to Corollary \ref{cor:1}(v).

(vi) The proof of Corollary \ref{cor:1}(vi) is similar to those of Corollaries \ref{cor:1}(ii) and (v).

(vii) From $\eta_t = \frac{\eta}{(t+1)^a}$, we have 
\begin{align*}
\sum_{t=0}^{T-1} \eta_t 
\geq \eta \int_0^T \frac{\mathrm{d}t}{(t+1)^a}
= 
\begin{dcases}
    \frac{\eta}{1 - a} \{ (T+1)^{1-a} - 1  \} \text{ } &(a \in (0,1)),\\
    \eta \log (T+1) \text{ } &(a =1)
\end{dcases}
\end{align*}
and 
\begin{align*}
\sum_{t=0}^{T-1} \eta_t^2 
\leq \eta^2
\left(1 + \int_0^{T-1} \frac{\mathrm{d}t}{(t+1)^{2a}} \right)
= 
\begin{dcases}
    \frac{\eta^2}{1 - 2a} T^{1 - 2a}  \text{ } &\left(a \in \left(0,\frac{1}{2} \right) \right),\\
    \eta^2 (1 + \log T) \text{ } &\left(a = \frac{1}{2} \right), \\
    \frac{2a \eta^2}{2a - 1} \text{ } &\left(a \in \left(\frac{1}{2}, 1 \right] \right).
\end{dcases}
\end{align*}
Accordingly, 
\begin{align*}
\frac{1}{\sum_{t=0}^{T-1} \eta_t}
\leq 
\begin{dcases}
    \frac{1 - a}{\eta (T^{1-a}-1)} \text{ } &(a \in (0,1)),\\
    \frac{1}{\eta \log T} \text{ } &(a =1)
\end{dcases}
\text{ and }
\frac{\sum_{t=0}^{T-1} \eta_t^2}{\sum_{t=0}^{T-1} \eta_t} 
\leq 
\begin{dcases}
\frac{(1-a)\eta T^{1-2a}}{(1 - 2a) (T^{1-a}-1)} \text{ } &\left( a \in \left(0, \frac{1}{2} \right)  \right), \\
\frac{\eta(1 + \log T)}{2 (\sqrt{T}-1)} \text{ } &\left( a = \frac{1}{2}  \right), \\
\frac{2 a (1-a) \eta}{(2a -1) (T^{1-a}-1)} \text{ } &\left( a \in \left(\frac{1}{2} ,1 \right)  \right), \\
\frac{2\eta}{\log T} \text{ } &\left( a = 1  \right).
\end{dcases}
\end{align*}
From $\sum_{t=0}^{T-1} \eta_t \beta^t \le \eta \sum_{t=0}^{T-1} \beta^t < \frac{\eta}{1-\beta}$, we have 
\begin{align*}
\frac{\sum_{t=0}^{T-1} \eta_t \beta^{t}}{\sum_{t=0}^{T-1} \eta_t}
\leq
\begin{dcases}
\frac{1 - a}{(1 - \beta) (T^{1-a}-1)} \text{ } &(a \in (0,1)),\\
\frac{1}{(1 - \beta) \log T} \text{ } &(a =1).
\end{dcases}
\end{align*}
Moreover, since we have 
\begin{align*}
\sum_{t=0}^{T-1}\eta_t \sum_{i=0}^{t}\beta^i\eta_{t-i}
\le
\sum_{k=0}^{T-1}\eta_k^2\sum_{t=k}^{T-1}\beta^{t-k} 
\leq 
\frac{1}{1-\beta}\sum_{k=0}^{T-1}\eta_k^2,
\end{align*}
we also have 
\begin{align*}
\frac{\sum_{t=0}^{T-1}\eta_t \sum_{i=0}^{t}\beta^i\eta_{t-i}}{\sum_{t=0}^{T-1}\eta_t}
\leq
\begin{dcases}
\frac{(1-a)\eta T^{1-2a}}{(1 - 2a)(1-\beta) (T^{1-a}-1)} \text{ } &\left( a \in \left(0, \frac{1}{2} \right)  \right), \\
\frac{\eta (1 + \log T)}{2 (1-\beta) (\sqrt{T} - 1)} \text{ } &\left( a = \frac{1}{2}  \right), \\
\frac{2 a (1-a) \eta}{(2a -1) (1-\beta) (T^{1-a}-1)} \text{ } &\left( a \in \left(\frac{1}{2} ,1 \right)  \right), \\
\frac{2 \eta}{(1-\beta) \log T} \text{ } &\left( a = 1  \right).
\end{dcases}
\end{align*}
From $b_t = b$, we have 
\begin{align*}
&\frac{\sum_{t=0}^{T-1} \eta_t \sum_{i=0}^{t} \frac{\beta^i}{\sqrt{b_{t-i}}}}{\sum_{t=0}^{T-1} \eta_t}
= 
\frac{1}{\sqrt{b}} 
\frac{
\sum_{t=0}^{T-1} \eta_t \sum_{i=0}^{t} \beta^i}
{\sum_{t=0}^{T-1} \eta_t}
\leq 
\frac{1}{(1 - \beta)\sqrt{b}}
\text{ and } 
\frac{\sum_{t=0}^{T-1} \frac{\eta_t}{\sqrt{b_t}}}{\sum_{t=0}^{T-1} \eta_t}
= \frac{1}{\sqrt{b}}.
\end{align*}
Hence, Theorem \ref{thm:1} leads to Corollary \ref{cor:1}(vii).

(viii) From $b_t = b \delta^t$, we have 
\begin{align*}
\sum_{t=0}^{T-1}\eta_t \sum_{i=0}^{t}\frac{\beta^i}{\sqrt{b_{t-i}}}
\le
\frac{1}{1-\beta}\sum_{k=0}^{T-1}\frac{\eta_k}{\sqrt{b_k}}
\le
\frac{\eta \sqrt{\delta}}{(1-\beta)(\sqrt{\delta} - 1)\sqrt{b}} 
\text{ and }
\sum_{t=0}^{T-1} \frac{\eta_t}{\sqrt{b_t}}
\leq 
\frac{\eta \sqrt{\delta}}{(\sqrt{\delta} - 1)\sqrt{b}}.
\end{align*}
Hence,
\begin{align*}
\frac{\sum_{t=0}^{T-1} \eta_t \sum_{i=0}^{t} \frac{\beta^i}{\sqrt{b_{t-i}}}}{\sum_{t=0}^{T-1} \eta_t}
\leq 
\frac{\sqrt{\delta}}{(1-\beta)(\sqrt{\delta} - 1)\sqrt{b}}
\times 
\begin{dcases}
    \frac{1 - a}{T^{1-a}-1} \text{ } &(a \in (0,1)),\\
    \frac{1}{\log T} \text{ } &(a =1)
\end{dcases}
\end{align*}
and 
\begin{align*}
\frac{\sum_{t=0}^{T-1} \frac{\eta_t}{\sqrt{b_t}}}{\sum_{t=0}^{T-1} \eta_t}
\leq
\frac{\sqrt{\delta}}{(\sqrt{\delta} - 1)\sqrt{b}}
\times 
\begin{dcases}
    \frac{1 - a}{T^{1-a}-1} \text{ } &(a \in (0,1)),\\
    \frac{1}{\log T} \text{ } &(a =1).
\end{dcases}
\end{align*}
Hence, Theorem \ref{thm:1} leads to Corollary \ref{cor:1}(viii).
\end{proof}

\end{document}